\numberwithin{equation}{section}
\newcommand{\clos}{\operatorname{closure}}
\newcommand{\s} {\sigma}
\def \Z {{\mathbb Z}}
\def \T {{\mathbb T}}
\def \R {{\mathbb R}}
\newtheorem{theorem}{Theorem}[section]
\newtheorem{lemma}[theorem]{Lemma}
\newtheorem{conjecture}[theorem]{Conjecture}
\newtheorem{question}[theorem]{Question}
\newtheorem{obs}[theorem]{Remark}
\theoremstyle{remark}
\begin{document}

\renewcommand{\subjclassname}{\textup{2000} Mathematics Subject Classification}


\setcounter{tocdepth}{2}

\keywords{}

\subjclass{}

\renewcommand{\subjclassname}{\textup{2000} Mathematics Subject Classification}

\date{\today}

\title{A non-dynamically coherent example on $\mathbb{T}^3$}

\begin{abstract}

In this paper we give the first example of a non-dynamically coherent partially hyperbolic diffeomorphism with one-dimensional center bundle. The existence of such an example had been an open question since 1975 \cite{bp}. 
\end{abstract}

\thanks{}

\author{F. Rodriguez Hertz}
\address{Department of Mathematics\\ The Pennsylvania State University,
University Park,
State College, PA 16802 .}
\email{hertz@math.psu.edu}

\author{M. A. Rodriguez Hertz}
\address{IMERL-Facultad de Ingenier\'\i a\\ Universidad de la
Rep\'ublica\\ CC 30 Montevideo, Uruguay.}
\email{jana@fing.edu.uy}\urladdr{http://www.fing.edu.uy/$\sim$jana}

\author{R. Ures}
\address{IMERL-Facultad de Ingenier\'\i a\\ Universidad de la
Rep\'ublica\\ CC 30 Montevideo, Uruguay.} \email{ures@fing.edu.uy}
\urladdr{http://www.fing.edu.uy/$\sim$ures}

\maketitle
\section{Introduction}

A diffeomorphism $f$ of a closed manifold $M$ is {\em partially
hyperbolic} if the tangent bundle of $M$, $TM$ splits into three invariant sub-bundles: 
$TM=E^{s}\oplus E^{c}\oplus E^{u}$
such that  all unit vectors
$v^\s\in E^\s_x$ ($\s= s, c, u$) with $x\in M$ satisfy  :
\begin{equation}\label{pointwise.ph}
 \|T_xfv^s\| < \|T_xfv^c\| < \|T_xfv^u\| 
\end{equation}
for some suitable Riemannian metric. The {\em stable bundle} $E^{s}$ must also satisfy
$\|Tf|_{E^s}\| < 1$ and the {\em unstable bundle}, $\|Tf^{-1}|_{E^u}\| < 1$. $E^{c}$ is called {\em center bundle}.\par
It is a well-known fact that the {\em strong} bundles, 
$E^s$ and $E^u$, are uniquely integrable \cite{bp,hps}.  That is, there are invariant foliations ${\mathcal W}^{s}$ and ${\mathcal W}^{u}$ tangent, respectively to the invariant bundles $E^{s}$ and $E^{u}$. \par
However, the integrability of $E^{c}$ is a more delicate matter. There are examples of partially 
hyperbolic diffeomorphisms with non-integrable center bundle. A. Wilkinson observed in \cite{wilk} that there is an Anosov diffeomorphism in a six-nilmanifold, which, when seen as a partially hyperbolic one, has a non-integrable center bundle. One can choose $E^{c}$ to  consist of the weakest part of the Anosov-stable bundle, and the weakest part of the Anosov unstable bundle. These bundles are non-jointly integrable, hence $E^{c}$ is not integrable. This example can be found in S. Smale's survey \cite{sm}, and it is attributed to A. Borel.  Further discussion on these examples can be found, for instance, in 
 \cite{bw_dyncoh}. \par
 $f$ is {\it $cs$-dynamically coherent} if there
exists an $f$-invariant foliation tangent to $E^{s}\oplus E^{c}$. $cu$-dynamical coherence is defined analogously. $f$ is dynamically coherent if it is both $cs$- and $cu$- dynamically
coherent.\par
As seen above, one obstruction to dynamical coherence is
the non-joint integrability of the sub-bundles of $E^{c}$. Is this the only obstruction to dynamical coherence?
What happens, for instance, if the center bundle dimension is one? Would dynamical coherence be then automatic? This question has been open since the 70's. 
Here we show the first example of a non-dynamical coherent partially hyperbolic diffeomorphism with a one-dimensional center bundle.

\begin{theorem}\label{teo.non.dc} There exists a non-void $C^{1}$-open set of partially hyperbolic diffeomorphisms $f:\T^{3}\to\T^{3}$ such that 
\begin{enumerate}
\item $f$ is non-dynamically coherent
\item $f$ admits an invariant $2$-torus tangent to $E^{c}\oplus E^{u}$ 
\end{enumerate}
 
\end{theorem}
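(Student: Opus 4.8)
The plan is to realize $f$ as a member of a $C^1$-open family obtained by deforming a model diffeomorphism that already carries a normally attracting invariant $2$-torus $T$ tangent to $E^c\oplus E^u$, on which the induced dynamics is a derived-from-Anosov map whose center line field admits no invariant foliation. Non-dynamical coherence will be forced by this last fact, and $C^1$-robustness will come from normal hyperbolicity of $T$ together with cone-field descriptions of all three bundles.

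First I would fix the picture near the torus. Writing $\T^3=\T^2\times\SS^1$, I look for $f$ that, in a neighborhood of $T=\T^2\times\{0\}$, has skew-product form $f(x,\theta)=(g(x),h(x,\theta))$ with $h(x,0)=0$ and $|\partial_\theta h(x,0)|<\nu$ for a tiny $\nu>0$, so that the $\SS^1$-direction is strongly contracted and becomes $E^s$, while $T\T^2$ carries $E^c\oplus E^u$. Since $\nu$ can be taken smaller than every contraction or expansion rate of $Dg$, the domination $\|Tf|_{E^s}\|\cdot\|Tf^{-1}|_{E^c\oplus E^u}\|<1$ holds along $T$, so $T$ is $1$-normally hyperbolic (normally attracting); by Hirsch--Pugh--Shub \cite{hps}, $T$ persists as a $C^1$ invariant torus $T'$ under every $C^1$-small perturbation, with $f|_{T'}$ depending continuously on $f$. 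I then extend this local model to a globally defined diffeomorphism of $\T^3$ and verify partial hyperbolicity using three mutually compatible invariant cone fields — one around $E^u$ (expanded), one around $E^c\oplus E^u$, one around $E^s$ (contracted); these are $C^1$-open conditions, so a whole $C^1$-neighborhood of $f$ is partially hyperbolic, and the perturbed torus $T'$ is automatically tangent to the perturbed $E^c\oplus E^u$, giving conclusion (2).

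The heart of the construction is the choice of $g\colon\T^2\to\T^2$. Starting from a linear Anosov automorphism $B$ of $\T^2$, I rename its contracting and expanding directions $E^c$ and $E^u$ and perform a surgery supported on a disk around a fixed point $p$: deform $B$ so that $p$ becomes a source while a cone field around $E^u$ stays strictly invariant. The result $g$ has a dominated splitting $E^c_g\oplus E^u_g$ with $E^u_g$ uniformly expanding — hence uniquely integrable, yielding an invariant foliation $\W^u_T$ of $T$ — while $E^c_g$ is expanded near $p$ and contracted elsewhere (e.g.\ at some other fixed point $q$). \textbf{The main obstacle} is to show that $g$ admits \emph{no} $g$-invariant topological foliation tangent to $E^c_g$, and to do so through $C^1$-open data. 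Supposing such an $\cF$ existed, the leaf through the source $p$ would be a $g$-invariant curve expanded by $g$; confronting $\cF$ with the transverse invariant expanding foliation $\W^u_T$ and running a Poincar\'e--Bendixson / Reeb-stability argument on $\T^2$ — the expanded center at $p$ and the contracted center at $q$ cannot coexist with an invariant $\cF$ transverse to an invariant expanding foliation without forcing a closed $\cF$-leaf bounding a Reeb-type region, or an impossible rotation number — yields a contradiction. Arranging the global configuration of the surgery so that this obstruction genuinely bites, and so that every hypothesis used (hyperbolic periodic points $p,q$ with prescribed center eigenvalues, the invariant expanding cone field, domination, and the linking topology) is $C^1$-open, is where the real work lies.

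Finally I deduce non-dynamical coherence. If $f$ were $cs$-dynamically coherent, the invariant foliation tangent to $E^s\oplus E^c$, restricted to a thin tubular neighborhood of $T$ and projected to $T$ along the $\SS^1$-fibers (a projection that intertwines $f$ and $g$ since $h(x,0)=0$), would be a one-dimensional foliation tangent to $E^c_g$ and invariant under $g=f|_T$, contradicting the previous step; hence $f$ is not $cs$-dynamically coherent, so not dynamically coherent, which is conclusion (1). Since partial hyperbolicity, persistence of $T'$, and non-existence of an invariant center foliation on $T'$ are all preserved under $C^1$-perturbation, the same conclusions hold on a $C^1$-neighborhood of $f$, producing the asserted non-void $C^1$-open set.
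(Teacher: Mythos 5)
There is a genuine gap, and it sits exactly where you flag ``the main obstacle'': the existence of a surface diffeomorphism $g$ of $\T^2$ with a dominated splitting $E^c_g\oplus E^u_g$ admitting \emph{no} invariant foliation tangent to $E^c_g$. You do not construct such a $g$, and the heuristic you offer for why a DA-type surgery should produce one is false. In the standard derived-from-Anosov surgery the deformation pushes points along the leaves of the linear stable foliation $\W^s_B$, so $\W^s_B$ remains $g$-invariant; its tangent bundle $E^s_B$ is then a $Dg$-invariant bundle dominated by $E^u_g$, hence equals $E^c_g$ by uniqueness of dominated splittings, and $\W^s_B$ \emph{is} an invariant foliation tangent to the center. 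This happens even though $p$ is a source (center expanded there) and the center is contracted at other fixed points: a leaf of an invariant foliation can perfectly well carry a repelling fixed point, so the ``expanded at $p$, contracted at $q$'' configuration produces no contradiction with the existence of $\cF$. Your Poincar\'e--Bendixson/Reeb argument therefore has a concrete counterexample, and with it the deduction of non-coherence collapses: your reduction only shows that coherence of $f$ would induce an invariant foliation of $T$ tangent to $E^c_g$, which the DA map happily provides.

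The paper's mechanism is structurally different and avoids this trap. In its example the dynamics \emph{on} the invariant $cu$-torus $\{\theta=\tfrac12\}$ is just the linear Anosov map $A$, and $E^c$ restricted to that torus is the (integrable) linear bundle $E^s_A$; the obstruction to coherence is not inside the torus but in how the center integral curves in the complement accumulate on it. Solving the twisted cohomological equation $u\circ\psi-\lambda u=v$ explicitly, the center curves off the torus are graphs $\theta\mapsto\gamma(\theta)e_s$ with $\gamma'(\theta)\to-\infty$ as $\theta\uparrow\tfrac12$ and $\gamma'(\theta)\to+\infty$ as $\theta\downarrow\tfrac12$ (by the choice $v'<0$ on $(0,\tfrac12)$ and $v'>0$ on $(\tfrac12,1)$). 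Thus the curves spiral into the torus with opposite orientations from the two sides, a Reeb-like configuration inside each center-stable leaf that cannot be completed to a foliation tangent to $E^c$; this is a genuinely three-dimensional, codimension-one phenomenon, not a failure of integrability of a line field on $\T^2$. If you want to rescue your approach you would need either to prove that a dominated, non-integrable line field on $\T^2$ can be realized invariantly (a hard and, to my knowledge, unresolved sub-problem), or to switch to the paper's mechanism, in which case the torus dynamics can and should remain Anosov.
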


The existence of such examples strongly 
contrasts with the following result obtained by M. Brin, D. Burago and S.
Ivanov:

\begin{theorem}[Brin, Burago, Ivanov] \cite{brin_burago_ivanov} All {\em absolutely} partially hyperbolic diffeomorphisms on $\T^{3}$ are dynamically coherent. 
\end{theorem}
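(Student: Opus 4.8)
\emph{Sketch of the intended argument.} The plan is to run the branching‑foliation strategy of Burago--Ivanov. Fix an absolutely partially hyperbolic $f\colon\T^3\to\T^3$ with $\dim E^c=1$. Replacing $f$ by an iterate and passing to a finite cover (again a $3$‑torus), we may assume $E^s,E^c,E^u$ are orientable with orientations preserved; it then suffices to produce $f$‑invariant foliations tangent to $E^s\oplus E^c$ and to $E^c\oplus E^u$, the $cu$ case being symmetric to the $cs$ one. By the Burago--Ivanov construction, $f$ admits \emph{branching foliations} $\mathcal{W}^{cs}_{\mathrm{br}}$ and $\mathcal{W}^{cu}_{\mathrm{br}}$: $f$‑invariant families of complete $C^1$ surfaces tangent to $E^s\oplus E^c$ (resp.\ $E^c\oplus E^u$), one through every point, any two of which are either disjoint or locally coincide --- they may merge but never topologically cross. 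Dynamical coherence is \emph{equivalent} to these surfaces never branching, i.e.\ to uniqueness of the leaf through each point, so everything reduces to a no‑branching statement.

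\smallskip

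The second step brings in the large‑scale geometry, which is where ``absolute'' --- as opposed to merely ``pointwise'' --- partial hyperbolicity is used. Lift $f$ to $\tilde f\colon\R^3\to\R^3$; its action on homology is a matrix $L\in\mathrm{GL}(3,\Z)$, and $\tilde f$ is a bounded distance from $L$. The uniform gaps in the definition of absolute partial hyperbolicity (global constants $\mu_s<\mu_c\le\nu_c<\mu_u$ with $\|Df|_{E^s}\|<\mu_s<1<\mu_u<\|Df|_{E^u}\|$ and $\mu_c<\|Df|_{E^c}\|<\nu_c$) force $L$ to be itself partially hyperbolic with one‑dimensional center, with real eigenvalues and pairwise uniformly transverse eigenlines; in particular $E^s_L\oplus E^c_L$ and $E^c_L\oplus E^u_L$ are $L$‑invariant affine foliations of $\R^3$ by parallel planes. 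The key geometric lemma is then that every lifted leaf of $\mathcal{W}^{cs}_{\mathrm{br}}$ is a quasi‑isometrically embedded plane lying within a bounded neighbourhood of some affine translate of $E^s_L\oplus E^c_L$, with constants depending only on $f$: the point is that $E^u$ is uniformly transverse to $E^s\oplus E^c$ and uniformly expanded, so a cone/quasigeodesic estimate (in the spirit of the stable‑manifold theorem, but with \emph{global} constants, which is exactly what ``absolute'' buys) shows any path inside a $cs$‑leaf is a uniform quasigeodesic of $\R^3$. Symmetrically for the $cu$‑leaves and $E^c_L\oplus E^u_L$.

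\smallskip

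The crux is the no‑branching step. Suppose two lifted $cs$‑leaves $S_1\ne S_2$ pass through a common point $p$ but separate. By the geometric lemma both lie within a bounded neighbourhood of the \emph{same} affine plane $P$ through $p$, so $S_1$ and $S_2$ stay at bounded distance for all time; on the other hand their set of separation is open, with frontier the ``branching locus'', along which $S_1$ and $S_2$ are tangent (they share the tangent plane $E^s\oplus E^c$ there). I would now iterate: $\tilde f^{-n}$ uniformly contracts the direction $E^u$, which is precisely the direction in which $S_1$ and $S_2$ pull apart, so $\tilde f^{-n}(S_1)$ and $\tilde f^{-n}(S_2)$ are forced together exponentially fast transversally while each remains, by the geometric lemma, within the \emph{same} fixed‑width neighbourhood of a plane; combining these two facts pins $\tilde f^{-n}(S_1)$ and $\tilde f^{-n}(S_2)$ arbitrarily close everywhere, whence $S_1=S_2$ --- a contradiction. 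The symmetric argument with $\tilde f^{n}$ and the contraction of $E^s$ handles $\mathcal{W}^{cu}_{\mathrm{br}}$. I expect this to be the main obstacle: correctly controlling two hypothetical branching leaves under iteration and converting the coarse rigidity into genuine coincidence.

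\smallskip

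Once no branching is established, $\mathcal{W}^{cs}_{\mathrm{br}}$ and $\mathcal{W}^{cu}_{\mathrm{br}}$ are genuine $C^1$ foliations of $\R^3$, tangent to $E^s\oplus E^c$ and $E^c\oplus E^u$ and invariant under $\tilde f$ and the deck group; they project to $f$‑invariant foliations of $\T^3$ tangent to $E^s\oplus E^c$ and $E^c\oplus E^u$, and descending through the finite cover and the iterate yields the same for the original $f$. Hence $f$ is dynamically coherent. It is worth stressing where the hypothesis is spent: the quasi‑isometry lemma of the second step, hence the whole argument, genuinely needs the \emph{uniform} gaps; for merely pointwise partial hyperbolicity it fails, and indeed Theorem~\ref{teo.non.dc} produces pointwise (non‑absolute) examples on $\T^3$ whose branching foliations do branch.
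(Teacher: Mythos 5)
This theorem is not proved in the paper you were given: it is an external result, quoted with a citation to Brin--Burago--Ivanov, whose actual proof appears in their papers (the 2009 J.~Mod.~Dyn.\ article together with Burago--Ivanov's work). So there is no in-paper argument to compare against; I can only assess your sketch on its own terms. Your overall route (Burago--Ivanov branching foliations, coarse control of lifted leaves by affine planes, and a rate-gap argument to exclude branching) is a legitimate strategy, closer in spirit to the later Hammerlindl--Potrie treatment than to Brin--Burago--Ivanov's own proof, which instead establishes that the \emph{one-dimensional} strong foliations $\mathcal W^s,\mathcal W^u$ are quasi-isometric in $\R^3$ and then invokes Brin's criterion that quasi-isometry of the strong foliations forces unique integrability of $E^s\oplus E^c$ and $E^c\oplus E^u$.

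There are two genuine gaps. First, your ``key geometric lemma'' (every lifted $cs$-leaf is a quasi-plane within uniformly bounded distance of a translate of a fixed linear plane) is asserted, not proved; it is the technical heart of all known proofs, it presupposes nontrivial facts about the linear part $L$ (e.g.\ that it has a real dominated splitting compatible with that of $f$), and it is usually derived \emph{from} quasi-isometry of the strong foliations rather than from a cone estimate alone. Second, and more seriously, the concluding step of your no-branching argument is invalid: from the fact that $\tilde f^{-n}(S_1)$ and $\tilde f^{-n}(S_2)$ become arbitrarily close as $n\to\infty$ you cannot conclude $S_1=S_2$, because transporting back by $\tilde f^{\,n}$ expands precisely the transverse ($E^u$) direction in which the closeness was gained; two genuinely distinct branching leaves would exhibit exactly this behaviour. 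The working mechanism is the opposite one: at a point where $S_1$ and $S_2$ have separated, join them by an arc of $\mathcal W^u$, iterate \emph{forward}, use the global (absolute) gap between the maximal expansion on $E^s\oplus E^c$ and the minimal expansion on $E^u$ to see that the unstable arc's length outgrows any drift of the two leaves inside their common bounded plane-neighbourhood, and use quasi-isometry of $\mathcal W^u$ to convert arc length into ambient displacement, contradicting that both leaves shadow the same plane. As written, your crux step does not close, and you yourself flag it as the main obstacle.
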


Absolute partial hyperbolicity is a more restrictive notion of partial
hyperbolicity, the bounds in its definition are global, contrasting with the usually used pointwise bounds. Namely, $f$ is {\em absolutely partially hyperbolic} if $TM$ admits an invariant splitting into three sub-bundles $TM=E^{s}\oplus E^{c}\oplus E^{u}$ and there are constants $\lambda<1<\mu$ such that all unit vectors $v^{\sigma}\in E_{x}^{\sigma}$, $\sigma=s,c,u$ and $x$ in $M$ satisfy: \par

\begin{equation}\label{absolute.ph}
 \|T_xfv^s\| < \lambda<\|T_xfv^c\| <\mu< \|T_xfv^u\| 
\end{equation}

\vspace*{1em}
Another important issue is the unique integrability. The following question is still open, both for absolutely and pointwise partially hyperbolic diffeomorphisms:
\begin{question}
Assuming $f$ is dynamically coherent, is there a unique invariant foliation tangent to $E^{c}$? 
\end{question}

However, we are able to provide an answer at the local level. If $E$ is a distribution, ${\mathcal W}$ a foliation tangent to $E$ and $W(x)$ is a leaf of ${\mathcal W}$ through the point $x$, $E$ is {\em locally uniquely integrable} at $x$ if any embedded arc through $x$ and tangent to $E$ is contained in $W (x)$.  Local unique integrability implies unique integrability.\par
We prove the following:
\begin{theorem}\label{teo.non.lui} There exists a non-void $C^{1}$ open set of (pointwise) partially hyperbolic diffeomorphisms $f:\T^{3}\to\T^{3}$ satisfying that 
$E^{c}$ is non-locally uniquely integrable.
\end{theorem}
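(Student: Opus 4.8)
The plan is to show that, after shrinking it if necessary, the very $C^1$–open set produced in the proof of Theorem \ref{teo.non.dc} already consists of diffeomorphisms whose center bundle fails to be locally uniquely integrable; no separate example is needed. Fix such an $f$ and let $T\subset\T^3$ be the invariant $2$–torus tangent to $E^c\oplus E^u$ furnished by Theorem \ref{teo.non.dc}(2). On $T$ the restriction $f|_T$ carries the dominated splitting $E^c|_T\oplus E^u|_T$ inherited from the ambient one, with $E^u$ uniformly expanding. From the explicit description of $f|_T$ in the construction, $E^c|_T$ integrates inside $T$ to an $f|_T$–invariant one–dimensional foliation $\cF$ (for instance $\cF$ is the stable foliation of $f|_T$ when the latter is Anosov on $T$), whose leaves are arcs tangent to $E^c$ contained in $T$. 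Choose a convenient $x\in T$ and let $\gamma_1$ be a small sub-arc of $\cF(x)$ through $x$.

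The core of the proof is to exhibit a \emph{second} embedded arc $\gamma_2$ through $x$, tangent to $E^c$ at each of its points, with $\gamma_2\not\subset T$ in any neighbourhood of $x$; then $\gamma_1$ and $\gamma_2$ witness the failure of local unique integrability of $E^c$ at $x$. To build $\gamma_2$ I would analyse $E^c$ on a tubular neighbourhood of $T$, exploiting the fact that the construction underlying Theorem \ref{teo.non.dc} is arranged so that $E^c$ twists across $T$: on one side of $T$ the integral curves of $E^c$ accumulate onto leaves of $\cF$, and $E^c$ is only transversally H\"older — not Lipschitz — along $T$, which is precisely the obstruction that permits a ``$Y$''–type branching of integral curves at points of $T$. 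Concretely I would take $y_n\to x$ with $y_n\notin T$, pass to a limit of their center arcs, glue this limiting arc to a short center arc emanating from $x$ into the opposite side of $T$, and check that the result is an embedded arc, everywhere tangent to $E^c$, and genuinely distinct from $\gamma_1$. Equivalently, invoking the Remark that local unique integrability implies unique integrability, it is enough to show that no $f$–invariant foliation tangent to $E^c$ can exist in a neighbourhood of $T$ — which is exactly what the twisting prevents, since such a foliation would be forced to restrict to $\cF$ on $T$.

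Robustness is then automatic: partial hyperbolicity, the existence of an invariant torus tangent to $E^c\oplus E^u$, the domination of $E^c$ by $E^u$ along it, and the one-sided accumulation of center curves are all $C^1$–open conditions, so the branching, hence the non-local-unique integrability of $E^c$, persists on a $C^1$–neighbourhood; this produces the required non-void $C^1$–open set.

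I expect the main obstacle to be the central step: upgrading the soft statement ``center curves accumulate onto $\cF$'' to an honest embedded arc through $x$ which is tangent to $E^c$ everywhere and truly escapes $T$ (rather than being reabsorbed into it). This needs quantitative control of the geometry and regularity of $E^c$ near $T$ — the twisting deliberately built into the example — together with a delicate gluing argument; the remaining ingredients (domination and the uniqueness of $\cF$ on $T$, graph transform, $C^1$–openness) are either standard or already provided by Theorem \ref{teo.non.dc}.
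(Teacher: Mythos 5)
Your route is genuinely different from the paper's, and it contains a real gap. The paper does \emph{not} reuse the non-dynamically coherent example for Theorem \ref{teo.non.lui}: it builds a second example (Subsection \ref{subsection.nonuniquely}) in which $v$ and $\psi$ are chosen with an odd symmetry about $\theta=\frac12$ forcing $\gamma'$ to have the \emph{same} sign on both sides of $\frac12$, so that the fibers $h^{-1}(h(x_0,\theta_0))$ of the semiconjugacy form a genuine $f$-invariant foliation $\mathcal{W}^c$ by circles tangent to $E^c$. The failure of local unique integrability is then witnessed where you look for it --- at the attracting torus $T_1=\{\theta=\frac12\}$, where $E^c=E^s_A$ and the lines parallel to $E^s_A$ inside $T_1$ are integral curves of $E^c$ not contained in the circle leaves of $\mathcal{W}^c$ --- and robustness comes from Hirsch--Pugh--Shub applied to the normally hyperbolic, plaque-expansive circle fibration $\mathcal{W}^c$ (giving dynamical coherence of all nearby $g$) together with persistence of the normally hyperbolic attractor $T_1$; no delicate gluing or quantitative control of the H\"older regularity of $E^c$ is needed.

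The gap in your version is this: the paper's definition of local unique integrability at $x$ presupposes a foliation $\mathcal{W}$ of the ambient manifold tangent to $E^c$, against whose leaf $W(x)$ competing arcs are compared. For the non-dynamically coherent example the paper proves that \emph{no} foliation tangent to $E^c$ exists (this is the content of the Reeb-like picture in Figure \ref{nonintegrability}), so the notion does not apply there except vacuously; your $\mathcal{F}$ is only a foliation of the two-torus $T$, not of $\T^3$, and cannot play the role of $\mathcal{W}$. The substantive content of Theorem \ref{teo.non.lui}, made explicit in Theorem \ref{teo.robnonuniq}, is precisely that a \emph{dynamically coherent} diffeomorphism --- one whose center bundle does integrate to a foliation --- can still fail local unique integrability; reusing the incoherent example forfeits exactly that content. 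A secondary, fixable point: the limit of center arcs through points $y_n\to x$ off $T$ is the $E^s_A$-segment through $x$ inside $T$ (the arcs flatten onto the torus because $|\gamma'|\to\infty$), so it reproduces $\gamma_1$ rather than producing a branch; the correct second arc is the closure of a single maximal center curve on one side of $T$, which terminates tangentially at a point of $T$, concatenated with a segment of $\mathcal{F}$. But the essential correction is the choice of example, not that detail.
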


\subsection{Idea of the construction} The idea of the example in Theorem \ref{teo.non.dc} occurred to us while proving that invariant foliations tangent to $E^{c}\oplus E^{u}$ do not have compact leaves \cite{hhu.foliations}. We wanted to prove in fact that there were no compact leaves tangent to $E^{c}\oplus E^{u}$ at all. However, we found that, by perturbing an Anosov map times a Morse-Smale diffeomorphism on the circle, we could obtain a partially  hyperbolic diffeomorphim in $\T^{3}$ with a center-unstable torus $T^{cu}$. \par
It is easy to see that $T^{cu}$ is an attractor. The example was built so that $E^{c}$, and hence $E^{c}\oplus E^{u}$ were uniquely integrable in $\T^{3}\setminus T^{cu}$.This implies that any invariant foliation tangent to $E^{c}\oplus E^{u}$, should contain a compact leaf, which is precluded by \cite{hhu.foliations}.\par
As a matter of fact, we claim that all non-dynamically coherent examples on 3-manifolds have this pattern. That is, they have (at least) an attracting or repelling periodic torus, and trivial dynamics on the rest of the manifold.\par
In fact, we claim that in ``most'' 3-manifolds, {\em all} partially hyperbolic diffeomorphisms are indeed dynamically coherent. More precisely,  
\begin{conjecture}[Hertz-Hertz-Ures (2009)]\label{conj}
If $f:M^3\rightarrow M^3$ is a non-dynamically coherent partially hyperbolic diffeomorphism then it admits a periodic torus tangent to either $E^{c}\oplus E^{u}$ or  $E^{s}\oplus E^{c}$.
\end{conjecture}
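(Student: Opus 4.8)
The plan is to establish the contrapositive: assuming $f\colon M^{3}\to M^{3}$ admits no periodic torus tangent to $E^{c}\oplus E^{u}$ or to $E^{s}\oplus E^{c}$, I want to conclude that $f$ is dynamically coherent. Replacing $f$ by a suitable power and $M^{3}$ by a finite cover, I may assume that $E^{s}, E^{c}, E^{u}$ are orientable and that their orientations are preserved; a periodic torus upstairs projects to a periodic torus or Klein bottle downstairs, and the latter is ruled out by the orientations of $E^{c}$ and $E^{u}$, so this reduction loses nothing. In this setting the branching-foliation construction of Burago and Ivanov produces $f$-invariant branching foliations $\cW^{cs}$ and $\cW^{cu}$ tangent to $E^{s}\oplus E^{c}$ and to $E^{c}\oplus E^{u}$ respectively; since a genuine (non-branching) foliation is in particular an invariant foliation tangent to the corresponding bundle, it suffices to prove that, under the standing hypothesis, $\cW^{cs}$ and $\cW^{cu}$ do not genuinely branch.

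Suppose for contradiction that $\cW^{cu}$ branches, and let $B^{cu}\subset M^{3}$ be the closed $f$-invariant set of points lying on two distinct leaves. The core step is to manufacture, out of a branching point, a compact $f$-periodic embedded surface tangent to $E^{c}\oplus E^{u}$. If $L_{1}\neq L_{2}$ are leaves through a point $x$ that agree on one side of their common locus, they cobound a region $R$; I would iterate $R$ under $f$, using completeness of the strong unstable foliation inside $cu$-leaves together with strong-stable holonomy between $cu$-leaves and the domination of the splitting, to show that $\overline{R}$ collapses onto an $f$-invariant set whose boundary contains a closed surface $S$ tangent to $E^{c}\oplus E^{u}$ — in spirit a Reeb--Novikov argument adapted to branching foliations subordinate to the partially hyperbolic splitting. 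Because $E^{c}\oplus E^{u}$ is an oriented plane field, $S$ carries a nowhere-zero vector field, so $\chi(S)=0$; orientability of $E^{c}$ and $E^{u}$ excludes the Klein bottle, so $S$ is a torus, and since $f$ maps the resulting configuration to itself a finite iterate fixes one of these tori, producing a periodic torus tangent to $E^{c}\oplus E^{u}$ and contradicting the hypothesis. Running the symmetric argument for $\cW^{cs}$ shows it too is a genuine foliation, and then $f$ is dynamically coherent.

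Throughout, the example of Theorem~\ref{teo.non.dc} is the test case to keep in mind: there the branching of $\cW^{cu}$ is supported exactly on the attracting torus $T^{cu}$, the bundle $E^{c}\oplus E^{u}$ is uniquely integrable on $M^{3}\setminus T^{cu}$, and no invariant $cu$-foliation exists precisely because completing the canonical foliation off $T^{cu}$ would make $T^{cu}$ a compact leaf, which is forbidden by \cite{hhu.foliations}. A clean intermediate statement worth isolating is that a branching $cu$-foliation with no compact leaf on a closed $3$-manifold is automatically a genuine foliation — a strengthening of \cite{hhu.foliations} — after which the whole problem reduces to the surface-production step above, and, correspondingly, the Brin--Burago--Ivanov theorem becomes the case in which the absolute bounds already preclude branching.

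I expect that surface-production step to be the main obstacle, and it is why the statement remains a conjecture: branching leaves need not be compact and may recur in complicated ways, there is no a priori compactness for the region between two branching leaves, and one must handle closed $3$-manifolds $M^{3}$ whose topology and fundamental group are uncontrolled. The available tools — codimension-one foliation theory, and leaf-conjugacy and classification results for partially hyperbolic systems on manifolds with virtually solvable or Seifert fundamental group — settle the problem only under additional hypotheses on $M^{3}$ or on the isotopy class of $f$.
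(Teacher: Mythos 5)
The statement you are asked to prove is stated in the paper as a \emph{conjecture}, not a theorem: the authors explicitly say that the general case remains open, and they only cite proofs in special cases (Potrie for $\T^{3}$, Hammerlindl--Potrie for $3$-solvmanifolds). So there is no proof in the paper to compare against, and your write-up should not be read as a proof either --- as you yourself concede in the final paragraph, the ``surface-production step'' is the entire content of the conjecture and you do not supply it.

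To name the gap concretely: after invoking the Burago--Ivanov branching foliations $\cW^{cs}$, $\cW^{cu}$ (which is indeed the standard strategy, and the one used in the known partial results), everything hinges on the claim that genuine branching of $\cW^{cu}$ forces the existence of a compact $f$-periodic surface tangent to $E^{c}\oplus E^{u}$. Your argument for this is ``iterate the region $R$ between two branching leaves and show $\overline{R}$ collapses onto an invariant set whose boundary contains a closed surface.'' Nothing guarantees that two branching leaves cobound a region with any compactness, that the branching locus is not dense or wildly recurrent, or that iteration produces a limit with a closed boundary surface rather than a non-compact or non-embedded one; a Novikov-type argument needs a vanishing cycle or a transverse closed loop, and none is exhibited. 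The known proofs in the $\T^{3}$ and solvmanifold cases do not run this local collapsing argument: they use the global topology of the manifold and the action of $f$ on $\pi_{1}$ (quasi-isometry properties of the lifted foliations in the universal cover) to control the branching, which is exactly the input that is missing for a general closed $3$-manifold. There are also smaller unaddressed points --- e.g.\ passing to a finite cover and a power of $f$ requires an argument that dynamical coherence descends, and an $f^{k}$-invariant foliation need not be $f$-invariant --- but these are minor compared to the central missing step. In short, your proposal is a reasonable road map coinciding with how the community has attacked the problem, but it does not close the conjecture, and the paper itself offers no proof of it.
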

Remarkably, a 2-torus like in Conjecture \ref{conj} can occur only in very few 3-manifolds. Indeed, we have the following result:
\begin{theorem}[\cite{anosov_tori}] A partially hyperbolic diffeomorphism on a 3-manifold, admitting a 2-torus tangent to either $E^{s}\oplus E^{u}$, $E^{s}\oplus E^{c}$ or $E^{c}\oplus E^{u}$ can only occur on the following 3-manifold:
\begin{itemize}
\item the 3-torus $\T^{3}$
\item the mapping torus of $-id:\T^{2}\to\T^{2}$
\item the mapping tori of hyperbolic automorphisms on $\T^{2}$ 
\end{itemize} 
\end{theorem}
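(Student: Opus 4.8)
The plan is to split the argument into a \emph{dynamical} part, which shows that the torus carries a hyperbolic action on its first homology, and a \emph{topological} part, which uses this to pin down the ambient manifold. Throughout I replace $f$ by a suitable power so that the periodic torus $T$ is invariant, and I set $g=f|_T$ and $A=g_*\in GL(2,\Z)$, the induced action on $H_1(T;\Z)\cong\Z^2$.

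First I would show that in each of the three cases $A$ is \emph{hyperbolic}, i.e.\ has two real eigenvalues off the unit circle. In the $E^s\oplus E^u$ case the restricted splitting makes $g$ an Anosov diffeomorphism of $T\cong\T^2$, whose homological action is hyperbolic by Franks--Manning. In the $E^c\oplus E^u$ case one has $E^u\subset TT$ along $T$, so by unique integrability of the strong bundle the leaves of $\W^u$ through points of $T$ remain in $T$; this produces an $f$-invariant expanding foliation of $\T^2$ whose leaves, being unstable manifolds, are lines and never circles. A foliation of $\T^2$ without closed leaves has a single irrational asymptotic direction, which $A$ must preserve; and an integer matrix with an irrational real invariant direction is necessarily hyperbolic, since a rational eigenvalue would have to be $\pm1$ with a rational eigendirection. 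The $E^s\oplus E^c$ case follows by applying this argument to $f^{-1}$. Thus in all cases $T$ is an ``Anosov torus''.

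The topological heart of the proof, and the step I expect to be the main obstacle, is to deduce from this that $M$ is a $\T^2$-bundle over $S^1$. I would first check that $T$ is incompressible: a compressing disk would yield an essential curve in $T$ spanning a rank-one $A$-invariant subgroup of $\Z^2$, hence a rational $A$-invariant line, contradicting the irrationality of the eigendirections of the hyperbolic $A$ (the degenerate case where $\pi_1(T)$ maps to a finite group is ruled out since it forces a reducible summand unable to carry $T$). With $\pi_1(T)\hookrightarrow\pi_1(M)$ in hand, I invoke the torus theorem together with geometrization and the JSJ decomposition: irreducibility and the infinite-order, irreducible automorphism $A$ exclude the atoroidal (hyperbolic-geometry) pieces and the Seifert fibrations with no torus fiber, leaving a torus bundle with monodromy $B\in GL(2,\Z)$ (finite quotients being handled separately and falling into the same list). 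Up to isotopy $T$ is then a fiber, and writing $\pi_1(M)=\Z^2\rtimes_B\Z$ the condition $f(T)=T$ forces $A$ to normalize $\langle B\rangle$, namely $ABA^{-1}=B^{\pm1}$.

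Finally, keeping $A$ hyperbolic, I would run through the conjugacy classes of $B$. If $B=\pm\mathrm{id}$ the normalizing relation is automatic, giving $\T^3$ and the mapping torus of $-\mathrm{id}$; if $B$ is itself hyperbolic one may simply take $A=B$, giving the mapping tori of hyperbolic automorphisms. The remaining classes are excluded: for $B$ of finite order $3,4,6$ the normalizer of $\langle B\rangle$ in $GL(2,\Z)$ is finite, being the units of an order in an imaginary quadratic field together with the elements inverting $B$, and so contains no hyperbolic element; for $B$ parabolic, $B$ and $B^{-1}$ share a rational eigenline which $A$ would then have to preserve, again precluding hyperbolicity of $A$. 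This leaves exactly the three families in the statement.
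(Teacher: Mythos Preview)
The paper does not prove this theorem: it is quoted, with citation \cite{anosov_tori}, as a result from the authors' companion paper and is used only as motivation for Conjecture~\ref{conj}. There is therefore no proof in the present paper to compare your attempt against.

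For what it is worth, your outline is broadly the strategy of \cite{anosov_tori}: one first shows that the induced action on $H_1(T;\Z)$ is hyperbolic (their terminology is that $T$ is an \emph{Anosov torus}), then proves incompressibility of $T$, and finally uses three--manifold topology to force $M$ onto the short list. Two places in your sketch are thinner than they look. First, the theorem as stated here does not assume $T$ is periodic; you begin by ``replacing $f$ by a power so that $T$ is invariant'', which already requires an argument (in \cite{anosov_tori} this is handled, but it is not free). Second, the passage from ``$M$ irreducible, $T$ incompressible, $A$ hyperbolic'' to ``$M$ is a $\T^2$--bundle over $S^1$'' is the substantial step, and your one sentence invoking the torus theorem, JSJ and geometrization hides real work: you have not argued that $M$ is irreducible, and an incompressible torus in a Seifert piece need not be a fiber, so one must actually use the hyperbolicity of $A$ together with the $f$--invariance of $T$ to rule out the remaining geometries. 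The rank--two kernel case in your incompressibility argument is also dismissed too quickly. None of these are fatal to the plan, but each needs the kind of detailed treatment that \cite{anosov_tori} supplies.
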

Also, notice that any of the 2-tori appearing in Conjecture \ref{conj} implies the existence of a periodic torus, which must be either attracting or repelling, since they are transverse to the stable or the unstable foliation. Therefore, if Conjecture \ref{conj} were true, any partially hyperbolic diffeomorphisms for which $\Omega(f)=M$ would be dynamically coherent. \par
Conjecture \ref{conj} has been proven true in the case of the 3-torus by R. Potrie \cite{potrie}, and in the case of  3-solvmanifolds by A. Hammerlindl and Potrie \cite{hp}. But the general case of Conjecture \ref{conj} remains open. What remains to be proven, were this conjecture true, is that {\em all} partially hyperbolic diffeomorphisms in 3-manifolds that are not solvmanifolds, are dynamically coherent.\par
\subsection{Sketch of the proof}

\section{Two examples}\label{section.example}
The non-dynamically coherent example is a 
perturbation of a product of a linear Anosov map on $\T^2$ and a
Morse-Smale map on the circle. The unperturbed map is Axiom A, but not
partially hyperbolic. We will perturb it to obtain partial hyperbolicity, the final diffeomorphism will also be Axiom A. We will only perturb in the stable
direction of the linear map. One can make more complicated examples
by allowing perturbations on the $u$-direction also. Still, our examples leave a $cu$-invariant 
torus. As long as we make an isotopy beginning in this
example and remaining in the partially hyperbolic world, it will have
an invariant $cu$-torus and hence an attractor (see Section \ref{sec.concl}). Hence the Shub type of
construction \cite{shub} of robustly transitive systems does not apply for these examples. 
As we stated in the introduction we believe that in dimension 3 there are no transitive examples of non-dynamically coherent diffeomorphisms. \par
\vspace{0.3cm}
We start by considering the unperturbed diffeomorphism $f_{0}$, which is the product of an Anosov automorphism of the 2-torus by a Morse-Smale map on the circle, that is, 
$f_{0}:\T^2\times S^1\to\T^2\times S^1$ is such that 
$$f_{0}(x,\theta)=(Ax, \psi(\theta))$$
Here $A$ is a hyperbolic
matrix in $SL(2,\Z)$ with eigenvalues $0<\lambda<1$ and $1/\lambda$. And $\psi:S^1\to S^1$ is a North Pole-South Pole map such that
\begin{eqnarray}
&\psi(0)=0, \psi(\frac{1}{2})=\frac{1}{2},& \label{equation.psi.1}\\
&1<\psi'(0)=\mu, \quad \psi'(\frac{1}{2})=\sigma<1,&\label{equation.psi.2}\\
&\sigma<\lambda<1<\mu<\frac{1}{\lambda}.\label{equation.psi.3}&
\end{eqnarray}\par
That is, if $\theta=0$ is the North Pole and $\theta=\frac{1}{2}$ is the South-Pole, then $\psi$ is chosen so that $\psi$ contracts more than the hyperbolic toral automorphism $A$ in the South-Pole, and $\psi$ expands less than $A$ in the North-Pole (see the figure below). \par
Now let $E^{s}_{A}$ be the contracting eigenspace of $A$, and consider a unit vector $e_{s}$ in $E^{s}_{A}$. We shall consider a perturbation $f:\T^{2}\times S^{1}\to \T^{2}\times S^{1}$ of $f_{0}$ of the form
$$f(x,\theta)=(Ax+v(\theta)e_s,\psi(\theta))$$ 

\begin{center}  \includegraphics[height=2cm]{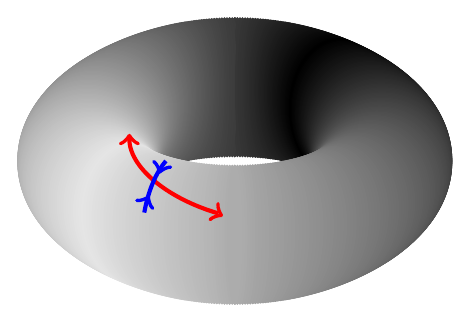}\\ \vspace*{-0.2cm} \includegraphics[height=4cm]{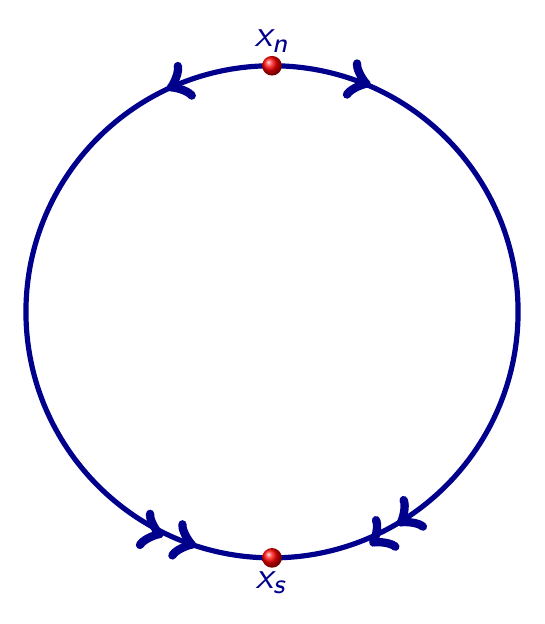}\\ \vspace*{-0.2cm} \includegraphics[height=2cm]{toro3.pdf}
  \end{center}

\par
We will carefully chose the map $v:S^1\to\R$, so that the perturbation $f$ be partially hyperbolic and non-dynamically coherent. Our strategy will be to obtain $f$ such that it be semi-conjugate to $A:\T^{2}\to\T^{2}$, via a semi-conjugacy $h:\T^{3}\to\T^{2}$. The center-stable foliation will be preserved.  For the sake of simplicity, we assume $h$ has the form:
$$h(x,\theta)=x-u(\theta)e_{s}$$
Then, $h\circ f=A\circ h$ yields

\begin{eqnarray*}
h(Ax+v(\theta)e_{s}, \psi(\theta))&=&Ah(x,\theta)\\
Ax+v(\theta)e_{s}-u(\psi(\theta))e_{s}&=&Ax-\lambda u(\theta)e_{\s}\\
v(\theta)-u(\psi(\theta))&=&-\lambda u(\theta)
\end{eqnarray*}

This yields the {\em twisted cohomological equation}:
\begin{equation}\label{twisted.cohomological.equation}
 u(\psi(\theta))-\lambda u(\theta)=v(\theta)\end{equation}

We will consider $v(0)=v(\frac12)=0$. A standard procedure gives that 
\begin{equation}\label{equation.gamma}
\gamma(\theta)=\frac{1}{\lambda}\sum_{k=1}^\infty \lambda^k
v(\psi^{-k}(\theta))
\end{equation}
is a solution to (\ref{twisted.cohomological.equation}). Since $0<\lambda<1$, $u$ is well defined and continuous,
in fact, it can be seen (see Lemma \ref{difofu}) that $u$ is $C^1$
for $\theta\ne1/2$. 
We also have that 
\begin{equation}\label{equation.beta}
\beta(\theta)=-\frac{1}\lambda\sum_{k=0}^\infty\lambda^{-k}v(\psi^k(\theta))
 \end{equation}
is  a solution to (\ref{twisted.cohomological.equation}). $\beta$ is well defined,
continuous and $C^1$ for $\theta\ne0$, (see Lemma
\ref{difofbeta}).

If we consider 
$$\alpha(\theta)=\gamma(\theta)-\beta(\theta)=\frac{1}{\lambda}\sum_{k\in\Z} \lambda^k
v(\psi^{-k}(\theta))$$
then $\alpha$ is a solution to the equation
\begin{equation}\label{cohomological.equation.2}
 u(\psi(\theta))-\lambda u(\theta)=0
\end{equation}

 that is $C^1$ for $\theta\ne0,1/2$. Observe that if $\alpha\ne0$ then it can not be
bounded.

\begin{lemma}\label{difofu}
$\gamma:S^1\to \R$ is well defined and  continuous. Also $u$ is $C^1$
for $\theta\ne1/2$.
\end{lemma}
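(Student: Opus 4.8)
The plan is to dispatch the easy assertions first and then concentrate on the $C^1$ statement, which is where all the work is. For well-definedness and continuity I would simply note that, since $v$ is bounded and $0<\lambda<1$, the series defining $\gamma$ is dominated by $\frac{\|v\|_\infty}{\lambda}\sum_{k\ge1}\lambda^k<\infty$; it therefore converges absolutely and uniformly in $\theta$, and as a uniform limit of continuous functions (the iterates $\psi^{-k}$ are continuous since $\psi$ is a diffeomorphism of $S^1$) it is continuous. For the $C^1$ claim I would invoke the standard term-by-term differentiation theorem: it suffices to prove that the formally differentiated series
\[
\frac1\lambda\sum_{k\ge1}\lambda^k\, v'(\psi^{-k}(\theta))\,(\psi^{-k})'(\theta)
\]
converges uniformly on a neighborhood of every $\theta_0\in S^1\setminus\{1/2\}$ (here one uses that $v$ is taken $C^1$, so $v'$ is bounded).

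The heart of the argument is a locally uniform geometric bound on $(\psi^{-k})'$. Recall that $\psi$ has $0$ as a repelling fixed point ($\psi'(0)=\mu>1$) and $1/2$ as an attracting one ($\psi'(1/2)=\sigma<1$), so for $\psi^{-1}$ the point $0$ is attracting with $(\psi^{-1})'(0)=1/\mu<1$ and $1/2$ is repelling, and every $\theta\ne1/2$ satisfies $\psi^{-k}(\theta)\to0$. I would fix $\tau\in(1/\mu,1)$ and a neighborhood $V$ of $0$ small enough that $\psi^{-1}(\overline V)\subset V$ and $(\psi^{-1})'\le\tau$ on $\overline V$. Given $\theta_0\ne1/2$, pick $N$ with $\psi^{-N}(\theta_0)\in V$ and, by continuity of $\psi^{-N}$, a neighborhood $W$ of $\theta_0$ with compact closure such that $\psi^{-N}(W)\subset V$; forward invariance of $V$ then gives $\psi^{-j}(\theta)\in V$ for all $\theta\in W$ and $j\ge N$. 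Writing $(\psi^{-k})'(\theta)=\prod_{j=0}^{k-1}(\psi^{-1})'(\psi^{-j}(\theta))$ and splitting the product at $j=N$, the first $N$ factors are bounded uniformly in $\theta\in W$ by the finite constant $C:=\prod_{j=0}^{N-1}\max_{\psi^{-j}(\overline W)}(\psi^{-1})'$, while each of the remaining factors is $\le\tau$; hence $|(\psi^{-k})'(\theta)|\le C\tau^{k-N}$ for all $\theta\in W$ and $k\ge N$.

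Feeding this into the differentiated series, the $k$-th term is bounded on $W$ (for $k\ge N$) by $\frac1\lambda\|v'\|_\infty C\tau^{-N}(\lambda\tau)^k$, with $\lambda\tau<1$; together with the harmless finite head, this yields uniform convergence on $W$, so $\gamma$ is $C^1$ near $\theta_0$, and hence on all of $S^1\setminus\{1/2\}$. I expect the only genuine subtlety — and the reason $1/2$ must be excluded — to be precisely the control of the finitely many backward iterates that may linger near $1/2$: there $(\psi^{-1})'\approx1/\sigma$, and since $\sigma<\lambda$ one has $\lambda^k(\psi^{-k})'\approx(\lambda/\sigma)^k\to\infty$, so no uniform bound survives at $1/2$. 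The argument works only because the backward orbit of any $\theta\ne1/2$ leaves any fixed neighborhood of $1/2$ after finitely many steps and is thereafter trapped in $V$, where the contraction $(\psi^{-1})'<1$ takes over.
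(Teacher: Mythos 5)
Your proof is correct, and since the paper explicitly leaves Lemmas \ref{difofu} and \ref{difofbeta} to the reader as standard exercises, it supplies exactly the intended argument: uniform geometric domination for continuity, and term-by-term differentiation justified by trapping the backward orbit of any $\theta_0\neq\tfrac12$ in an attracting neighborhood of $0$ where $(\psi^{-1})'\le\tau<1$, so that $\lambda^k(\psi^{-k})'$ decays geometrically on a neighborhood of $\theta_0$. This is the same mechanism the paper quantifies later in estimates (\ref{compderf})--(\ref{compaproxf}), and your closing remark correctly identifies why the argument must fail at $\theta=\tfrac12$, where $\sigma<\lambda$ forces $\lambda^k(\psi^{-k})'\sim(\lambda/\sigma)^k\to\infty$.
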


\begin{lemma}\label{difofbeta}
$\beta:S^1\setminus \{0\}\to \R$ is well defined and  continuous.
Also $\beta$ is $C^1$ for $\theta\ne0$.
\end{lemma}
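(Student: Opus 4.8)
The plan is to use the inequality $\si<\la$ from \eqref{equation.psi.3}, which is exactly what forces the series \eqref{equation.beta} to converge: the weights $\la^{-k}$ grow geometrically, but the forward orbit $\psi^{k}(\te)$ is attracted to the South Pole $\tfrac12$, where $\psi$ contracts at rate $\si$, and because $v(\tfrac12)=0$ the factors $v(\psi^{k}(\te))$ decay like $\si^{k}$; the product then decays like $(\si/\la)^{k}$, a summable geometric rate. First I would fix a constant $\si'$ with $\si<\si'<\la$ together with a neighborhood $U$ of $\tfrac12$ on which $|\psi'|\le\si'$, which exists since $\psi'$ is continuous and $\psi'(\tfrac12)=\si<\si'$. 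This $U$ is forward invariant, and for an orbit that has entered $U$ at time $N_0$ one has $|\psi^{N_0+j}(\te)-\tfrac12|\le(\si')^{\,j}\diam(U)$; since $v$ is $C^1$ with $v(\tfrac12)=0$, the mean value theorem gives the global bound $|v(y)|\le L\,|y-\tfrac12|$ with $L=\sup|v'|$.

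Next I would establish that $\beta$ is well defined and continuous on $S^1\setminus\{0\}$ by proving uniform convergence of \eqref{equation.beta} on each compact $K\subset S^1\setminus\{0\}$. Because $0$ is the repelling North Pole and $\tfrac12$ the attracting South Pole, every forward orbit enters $U$; and since $K$ is bounded away from the source, the entrance time is bounded by a single $N_0=N_0(K)$, uniformly over $\te\in K$. For $k\ge N_0$ the estimates above yield $\la^{-k}\,|v(\psi^{k}(\te))|\le C\,(\si'/\la)^{\,k}$ with $C$ independent of $\te\in K$, so the Weierstrass $M$-test gives uniform convergence of the tail; the finitely many initial terms are continuous in $\te$, so $\beta$ is well defined and continuous on $S^1\setminus\{0\}$.

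For the $C^1$ assertion I would differentiate \eqref{equation.beta} term by term, obtaining $\beta'(\te)=-\tfrac1\la\sum_{k\ge0}\la^{-k}v'(\psi^{k}(\te))\,(\psi^{k})'(\te)$, and show this converges uniformly on each compact $K\subset S^1\setminus\{0\}$. Writing $(\psi^{k})'(\te)=\prod_{j=0}^{k-1}\psi'(\psi^{j}(\te))$ and splitting the product at the uniform entrance time $N_0$, the initial factor $(\psi^{N_0})'(\te)$ is bounded on $K$ while every factor past $N_0$ is at most $\si'$, so $|(\psi^{k})'(\te)|\le C'(\si')^{\,k}$; combined with $\sup|v'|<\infty$ this gives $\la^{-k}\,|v'(\psi^{k}(\te))|\,|(\psi^{k})'(\te)|\le C''(\si'/\la)^{\,k}$, again summable. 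Uniform convergence of the differentiated series, together with the uniform convergence of the partial sums from the previous step, shows that $\beta$ is $C^1$ on $S^1\setminus\{0\}$ with the indicated derivative.

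The crux — and the only place the precise numerology enters — is the competition between the growth $\la^{-k}$ and the contraction supplied by the dynamics, and the argument closes exactly because the sink contracts strictly faster than $\la$, i.e.\ $\si<\la$. This is also why the point $0$ must be excluded: near the source the orbit lingers, picking up factors of $\psi'(0)=\mu>1$, and since $\mu>1>\la$ forces $\mu/\la>1$, the entrance time $N_0$ blows up as $\te\to0$ and the uniform geometric bound is lost. Thus the exceptional point for $\beta$ is the source $0$, in perfect symmetry with the role played by the sink $\tfrac12$ for $\gamma$ in Lemma \ref{difofu}.
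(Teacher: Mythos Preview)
Your argument is correct and is exactly the standard computation the authors had in mind: the paper does not actually prove this lemma, explicitly stating that Lemmas~\ref{difofu} and~\ref{difofbeta} ``are standard exercises and we leave their proofs to the reader.'' Your use of $\sigma<\lambda$ together with $v(\tfrac12)=0$ to produce a geometric tail of rate $(\sigma'/\lambda)^{k}$, followed by the Weierstrass $M$-test and termwise differentiation on compact subsets of $S^{1}\setminus\{0\}$, is precisely the intended route.
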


Lemmas \ref{difofu} and \ref{difofbeta} are standard exercises and
we leave their proofs to the reader.
\subsection{The partially hyperbolic splitting}
Let us define the partially hyperbolic splitting: First of all, choose $E^u=E^u_A.$
Since the center-stable space $E^s_A\times TS^1$ is preserved by $f$, we
shall search for $E^s_{f}$ and $E^c_{f}$ inside $E^{s}_{A}\times TS^{1}$.\par
Define for $\theta\neq 1/2$,
\begin{equation}\label{E.c}
E^c_{f}(x,\theta)=[(\gamma'(\theta)e^s,1)] 
\end{equation}
 where $[v]$ is the vector
space spanned by $v$. For $\theta=\frac12$, define $E^c_{f}(x,1/2)=E^s_A\times \{0\}$. 
\begin{obs}\label{remark.1}
 Observe that, even though $\gamma'$ is not continuous a priori in $\frac12$, $\psi$ and $v$ can be chosen so that $E^{c}_{f}(x,\theta)$ is indeed continuous on all points of $\T^{3}$, see Lemma \ref{fandv}.
\end{obs}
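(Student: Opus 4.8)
Off the $2$-torus $\{\theta=1/2\}$ there is nothing to do: by Lemma \ref{difofu} the map $\gamma$ is $C^{1}$ on $S^{1}\setminus\{1/2\}$, so $(x,\theta)\mapsto[(\gamma'(\theta)e^{s},1)]$ is a continuous line field there (nowhere degenerate, as its $S^{1}$-component is $1$). In the affine chart $t\mapsto[(t\,e^{s},1)]$ of $\PP(E^{s}_{A}\oplus TS^{1})$ the line $E^{c}_{f}(x,1/2)=E^{s}_{A}\times\{0\}$ is the point at infinity, so the whole statement reduces to showing that $\psi$ and $v$ may be chosen with
\[
|\gamma'(\theta)|\longrightarrow\infty\qquad\text{as }\theta\to 1/2 .
\]

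The plan is to exploit the resonance $\sigma<\lambda$ of \eqref{equation.psi.3}. Take $\psi$ affine of slope $\sigma$ on a neighbourhood $[1/2-R',1/2+R']$ of the sink, and take $v$ smooth, odd about $1/2$, supported in $[1/2-R,1/2+R]$ with $R<R'$, and affine of a slope $a=v'(1/2)\neq0$ on $[1/2-\delta_{0},1/2+\delta_{0}]$. For $\theta=1/2+s$ with $s$ small the orbit $\psi^{-k}(\theta)$ leaves each neighbourhood of $1/2$ monotonically, so $\psi^{-k}(\theta)=1/2+s\sigma^{-k}$ for every $k$ for which this point lies in $\supp v$, and all other terms of the series \eqref{equation.gamma} vanish. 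Differentiating the resulting finite sum and re-indexing by $j=K(s)-k$ gives
\[
\gamma'(1/2+s)=\frac{(\lambda/\sigma)^{K(s)}}{\lambda}\,V\bigl(R_{0}(s)\bigr)+O(1),
\qquad
V(\rho):=\sum_{j\ge0}(\sigma/\lambda)^{j}\,v'(1/2+\rho\sigma^{j}),
\]
where $K(s)=\lfloor\log(R/|s|)/\log(1/\sigma)\rfloor\to\infty$, where $R_{0}(s)=s\sigma^{-K(s)}$ sweeps the fixed set $\{\sigma R\le|\rho|\le R\}$, and where $V$ converges because $\sigma/\lambda<1$. Since $(\lambda/\sigma)^{K(s)}\to\infty$, the divergence of $\gamma'$ (uniformly in $\theta\to1/2$) will follow as soon as $V$ is bounded away from $0$ on $\{\sigma R\le|\rho|\le R\}$.

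This non-vanishing of $V$ is the only delicate step, and it is exactly where the freedom in choosing $\psi,v$ is spent: a zero $V(\rho_{*})=0$ with $\sigma R\le|\rho_{*}|\le R$ would, through the self-similarity $\gamma'(1/2+\sigma s)=(\lambda/\sigma)\gamma'(1/2+s)+\sigma^{-1}v'(1/2+s)$, give a sequence $\theta_{n}\to1/2$ along which $\gamma'(\theta_{n})$ stays bounded, and then $E^{c}_{f}$ would in fact be \emph{dis}continuous at $1/2$. To rule this out, take $\delta_{0}$ a fixed fraction of $R$ with $\sigma R<\delta_{0}<R$: then for $\sigma R\le|\rho|\le R$ every $j\ge1$ term of $V(\rho)$ has its argument inside the affine part of $v$ and contributes exactly $a\sum_{j\ge1}(\sigma/\lambda)^{j}=\tfrac{a\sigma}{\lambda-\sigma}$, while the $j=0$ term is $\ge-\lVert v'\rVert_{\infty}$; keeping the taper of $v$ short enough that $\lVert v'\rVert_{\infty}<\tfrac{\sigma}{\lambda-\sigma}\,a$ makes $V(\rho)>0$ throughout (and $V(-\rho)=V(\rho)$ since $v'$ is even about $1/2$, so both approaches $\theta\to1/2^{\pm}$ are covered). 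That inequality forces $\lambda-\sigma$ to be small, but this costs nothing, since $\sigma<\lambda<1<\mu<1/\lambda$ leaves $\sigma$ free to be picked arbitrarily close to $\lambda$. With $\psi$ and $v$ chosen in this way --- and, as established in Lemma \ref{fandv}, simultaneously so that $f$ is partially hyperbolic --- one gets $|\gamma'(\theta)|\to\infty$ as $\theta\to1/2$, hence $E^{c}_{f}$ is continuous at every point of $\T^{3}$. The main obstacle is precisely this uniform two-sided blow-up of $\gamma'$: ruling out directions of approach to $\{\theta=1/2\}$ along which the center line fails to become vertical.
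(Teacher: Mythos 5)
Your argument is correct, but it reaches the conclusion by a genuinely different route from the one the paper uses for this remark. The paper's choice in Subsection \ref{sub.nondynco} takes $v'$ of constant sign on each of $(0,\tfrac12)$ and $(\tfrac12,1)$; then \emph{every} term of the differentiated series \eqref{equation.gamma} has the same sign, so there is no cancellation to control, and the single term $k=N(\theta)$ already yields $|\gamma'(\theta)|\geq C(\lambda/\sigma)^{N(\theta)}\to\infty$ for an arbitrary Morse--Smale $\psi$ and with no constraint on $\lambda-\sigma$. You instead allow $v'$ to change sign (it must, since your $v$ is compactly supported with $v'(\tfrac12)=a\neq0$), linearize $\psi$ near the sink, and beat the resulting cancellation by making the affine part of the renormalized sum $V$ dominate the single ``taper'' term --- which is exactly the strategy of the paper's \emph{second} construction (Subsection \ref{subsection.nonuniquely}, the estimate with the constants $C_1,C_2,C_3$ and $\sigma$ close to $\lambda$); your rewriting $\gamma'(\tfrac12+s)=\lambda^{-1}(\lambda/\sigma)^{K(s)}V(R_0(s))+O(1)$ is a clean repackaging of that estimate, and the self-similarity relation you invoke is the correct differentiated cohomological equation. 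Two caveats, neither of which is a gap for the remark itself. First, since your $v$ is odd about $\tfrac12$ (so $v'$ is even), $V>0$ forces $\gamma'\to+\infty$ from \emph{both} sides; this proves continuity of $E^{c}_{f}$, but it produces the dynamically coherent, non-locally-uniquely-integrable case --- the non-dynamically coherent example of Theorem \ref{teo.non.dc} needs opposite signs of $\gamma'$ on the two sides of $\tfrac12$, which your symmetry forbids (an even $v$, hence odd $v'$, about $\tfrac12$ would restore it, at the price of redoing the $V>0$ estimate with $a$ replaced by the one-sided slope). Second, ``keeping the taper of $v$ short'' points the wrong way: shrinking the interval $[\delta_0,R]$ on which $v$ descends to $0$ makes $\|v'\|_\infty$ larger, not smaller; the inequality $\|v'\|_\infty<\tfrac{a\sigma}{\lambda-\sigma}$ is achieved, as you then correctly say, by pushing $\sigma$ toward $\lambda$ (and taking $\delta_0$ only slightly above $\sigma R$). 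What your approach buys is a construction that works even when $v'(\tfrac12)\neq0$ and $v'$ is not single-signed; what the paper's first choice buys is a two-line, cancellation-free proof of the same blow-up.
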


Analogously, define 
\begin{equation}\label{E.s}
E^s_{f}(x,\theta)=[(\beta'(\theta)e_{s},1)] 
\end{equation}
 if $\theta\neq
0$ and $E^s(x,0)=E^s_A\times\{0\}$.
\begin{obs}\label{remark.2}
The same remark holds for $E^{s}_{f}$, even though $\beta'$ is not continuous a priori in $\theta=0$, $\psi$ and $v$ such that the bundle $E^{s}_{f}(x,\theta)$ is indeed continuous on all points of $\T^{3}$, see Lemma \ref{fandv}
\end{obs}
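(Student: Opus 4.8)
The plan is first to reduce the statement to a single scalar asymptotic. Since the formula \eqref{E.s} for $E^s_f$ does not involve $x$, continuity on all of $\T^3$ is the same as continuity of the line field $\theta\mapsto E^s_f(\cdot,\theta)$ on $S^1$. For $\theta\neq 0$ this is immediate from Lemma \ref{difofbeta}: there $\beta$ is $C^1$, so $E^s_f=[(\beta'(\theta)e_s,1)]$ varies continuously. At $\theta=0$ we have declared $E^s_f=E^s_A\times\{0\}=[(e_s,0)]$, and in the Grassmannian of lines of $E^s_A\times TS^1$ one has $[(\beta'(\theta)e_s,1)]=[(e_s,1/\beta'(\theta))]\to[(e_s,0)]$ exactly when $|\beta'(\theta)|\to\infty$. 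Thus the whole statement amounts to choosing $\psi$ and $v$ so that $|\beta'(\theta)|\to+\infty$ as $\theta\to0$; the sign of $\beta'$ is irrelevant.

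To analyse $\beta'$ near the repelling fixed point $\theta=0$ I would differentiate \eqref{twisted.cohomological.equation} with $u=\beta$, which gives $\psi'(\theta)\beta'(\psi(\theta))=\lambda\beta'(\theta)+v'(\theta)$; this is precisely the $Df$-invariance of the line field $[(\beta'(\theta)e_s,1)]$. Choosing $\psi$ linear and $v$ affine in a neighbourhood of $0$ (so $\psi(\theta)=\mu\theta$ and $v'(\theta)\equiv v'(0)$ there), the equation becomes $\mu\beta'(\mu\theta)=\lambda\beta'(\theta)+v'(0)$. Its general solution near $0$ is $\beta'(\theta)=L+|\theta|^{\rho}\,P(\log|\theta|)$, where $L=v'(0)/(\mu-\lambda)$ is the constant (particular) solution, $\rho=\log(\lambda/\mu)/\log\mu<0$, and $P$ is an arbitrary function of period $\log\mu$ fixed by the values of $\beta'$ on one fundamental domain $\{\delta_0/\mu\le|\theta|<\delta_0\}$ (one arc on each side of $0$). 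Since $\rho<0$, the amplitude $|\theta|^{\rho}$ blows up as $\theta\to0$, at the rate $(\mu/\lambda)^{n(\theta)}$ coming from the $n(\theta)\approx-\log|\theta|/\log\mu$ iterates spent near the repeller.

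Consequently $|\beta'(\theta)|\to\infty$ uniformly if and only if the periodic profile $P$ is nowhere zero, equivalently $\beta'(\theta)-L$ is single-signed on the fundamental domain. This is the heart of the matter and is exactly where the freedom in choosing $\psi,v$ is used: if $P$ vanishes somewhere, then along the sequences $\theta_j\to0$ on which $\log|\theta_j|$ meets the zeros of $P$ one has $\beta'(\theta_j)\to L$, so $[(\beta'(\theta_j)e_s,1)]\to[(Le_s,1)]$ rather than to the horizontal line, and continuity fails. The condition $P\not\equiv0$ is governed by $\alpha=\gamma-\beta$, which solves \eqref{cohomological.equation.2} and satisfies $\alpha(\psi^{-n}(\theta))=\lambda^{-n}\alpha(\theta)$, so $P\not\equiv0$ is equivalent to $\alpha\not\equiv0$; the stronger requirement that $P$ be single-signed is a nonempty $C^1$-open condition on $v$ (one arranges $\beta'>L$ on the fundamental domain), compatible with the open set produced in Theorem \ref{teo.non.dc}.

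The step I expect to be the real obstacle is upgrading ``$\beta'$ unbounded'' to the uniform limit $|\beta'(\theta)|\to+\infty$, that is, verifying the single-sign property of $P$ and controlling the error terms when $\psi,v$ are not exactly linear; this is the technical content carried out in Lemma \ref{fandv}. A clean way to get the uniformity is to pass to the slope $s=1/\beta'$, for which invariance reads $s(\psi(\theta))=\psi'(\theta)s(\theta)/(\lambda+v'(\theta)s(\theta))$; at $\theta=0$ this freezes to the linear fractional map with fixed points $s=0$ (multiplier $\mu/\lambda>1$) and $s_*=(\mu-\lambda)/v'(0)$ (multiplier $\lambda/\mu<1$), so $s=0$ is attracting for the backward dynamics. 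Since every $\theta$ near $0$ is a large backward iterate of a point in a fixed fundamental domain, the slope is contracted toward $0$ at the uniform rate $(\lambda/\mu)^{n(\theta)}$ provided $s$ avoids the repelling value $s_*$ --- which is exactly the single-sign condition on $P$. Finally, the companion claim of Remark \ref{remark.1} for $E^c_f$ at $\theta=1/2$ follows by the same argument with $\gamma,\sigma$ in place of $\beta,\mu$ and the roles of the two poles exchanged.
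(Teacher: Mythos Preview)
Your reduction is exactly right: continuity of $E^s_f$ on $\T^3$ amounts to the scalar statement $|\beta'(\theta)|\to\infty$ as $\theta\to 0$, and your differentiated cohomological equation $\psi'(\theta)\beta'(\psi(\theta))=\lambda\beta'(\theta)+v'(\theta)$ is the invariance identity the paper uses too. From there, however, you and the paper diverge.

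The paper never linearises $v$ at the repeller. Instead it takes $v$ with $v'(0)=0$, $v''(0)\neq 0$, and $v'$ of a \emph{single sign} on each of $(0,\tfrac12)$ and $(\tfrac12,1)$ (e.g.\ $v(\theta)=1+\cos 2\pi\theta$). Then every term of the series
\[
\beta'(\theta)=-\tfrac{1}{\lambda}\sum_{k\ge 0}\lambda^{-k}\,v'(\psi^k(\theta))\,(\psi^k)'(\theta)
\]
has the same sign on each interval, so a lower bound by the single term with index $N(\theta)$ (the last iterate still in the $\epsilon_0$-neighbourhood of $0$) already gives $|\beta'(\theta)|\ge C\,d(\theta,0)^{-a}$ with $a=1-\log\lambda/\log\mu>0$. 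No fundamental-domain analysis, no periodic profile, no non-resonance check is needed: the sign hypothesis on $v'$ does all the work.

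Your route---take $v'$ locally constant, solve the affine recursion explicitly as $\beta'=L+|\theta|^{\rho}P(\log|\theta|)$, and then require that the periodic profile $P$ be nowhere zero---is correct and yields the same blow-up exponent $-\rho=1-\log\lambda/\log\mu$. Its cost is that the crucial step (``arrange $\beta'\neq L$ on a fundamental domain'') is an implicit global condition on $v$: the values of $\beta'$ on that domain are a full tail of the series above and depend on $v'$ all the way out to the attracting pole. You assert this is a nonempty open condition, which is true, but you do not exhibit a $v$ meeting it; the paper's choice sidesteps this entirely by making every summand positive. Your slope-variable/graph-transform reformulation with $s=1/\beta'$ is a nice dynamical restatement of the same obstruction (avoid the repelling fixed slope $s_*$), and would be the natural way to prove \emph{openness} of the condition, but it does not by itself prove \emph{nonemptiness}. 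In short: your argument is sound and more structural, the paper's is cruder but self-contained; if you want to keep your approach, add one line showing a concrete $v$ with $\beta'\neq L$ on the fundamental domain (for instance, keep $v'$ single-signed so that $\beta'$ itself is single-signed there).
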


The derivative of $f$ is:
$$D_{(x,\theta)}f(v,t)=(Av+tv'(\theta)e_s,t\psi'(\theta)).$$
The bundles $E^{s}_{f}$ and $E^{c}_{f}$ only depend on the variable
$\theta$. Hence, once we have an integral curve of $E^c_{f}$ then
we get the other integral curves by translating it. In a similar way we obtain the stable leaves 
$W^{s}$, once we get one of them.\par

By differentiating equations (\ref{equation.gamma}) and (\ref{equation.beta}), it is not
hard to see that the bundles $E^{c}_{f}$ and $E^{s}_{f}$ are invariant. Moreover, using Lemmas
\ref{difofu} and \ref{difofbeta} we can see that the bundles are
continuous for $\theta\ne1/2,0$.\par
Let us prove that $E^{s}_{f}$ and $E^{c}_{f}$ define a 
splitting with bounded angle. Observe that a measure of the angle between $E^c_{f}$ and $E^s_{f}$ is
given by $\alpha'=\gamma'-\beta'$.\par
To show that the angle between $E^{s}_{f}$ and $E^{c}_{f}$ is bounded, let us analyze the behavior of $\alpha'$: $S^1\setminus \{0,1/2\}$
consists of two invariant intervals which we identify with $(0,1/2)$
and $(1/2, 1)$. On each interval one has a fundamental domain, which is also an interval. The dynamics is trivial on $(0$ to $1/20,\frac12)$ and on $(\frac12,1)$. \par
Since $\alpha$ is a solution to (\ref{cohomological.equation.2}), by differentiating $\alpha$ we obtain that $\alpha'\circ\psi\cdot \psi'=\lambda\alpha'$. Therefore, if $\alpha'$ is nonzero in a
fundamental domain of, say, $(0,\frac12)$ then it is nonzero everywhere on $(0,\frac12)$.
Analogously for $(\frac12,1)$ Now, on $\theta=0$ and $\theta=1/2$ it is trivial to check that the
angle between $E^s_{f}$ and $E^c_{f}$ is nonzero. By continuity (see Remarks \ref{remark.1} and \ref{remark.2}), in a neighborhood of
$\theta=1/2$ (and of $\theta=0$) there is a complete fundamental domain where this angle is non-zero, and then, $\alpha'$ does
not vanish. By the previous argument $\alpha'$ does not vanish anywhere.
 As a consequence we
have the desired splitting. Finally, the splitting is partially
hyperbolic due to the the fact that it is partially hyperbolic on
$\theta=0$ and $\theta=1/2$.\newline\par

By choosing different functions $v$ and $\psi$, there will be two cases: in one, $\alpha'$ has opposite signs on $(0,\frac12)$ and $(\frac12,1)$; this gives the non-dynamically coherent example. The other case, where $\alpha'$ has the same sign on $(0,\frac12)$ and $(\frac12,1)$ gives a dynamically coherent case, but there $E^{c}_{f}$ is not locally uniquely integrable. This last case is treated in Section \ref{subsection.nonuniquely}.\newline\par
Let us find suitable $v$ and $\psi$:

\begin{lemma}\label{fandv}
There are maps $v$ and $\psi$, such that, if $f(x,\theta)=(Ax+v(\theta)e_{s},\psi(\theta))$, then
$E^c_{f}$ and $E^s_{f}$, as defined in Equations (\ref{E.c}) and (\ref{E.s}) are 
continuous. \par
Namely,  $\lim_{\theta\to 1/2}
|\gamma'(\theta)|=\infty$ and $\lim_{\theta\to 0}
|\beta'(\theta)|=\infty$. Also, $E^u\oplus E^c\oplus E^s$ is a partially
hyperbolic splitting. \par
We can further choose $v$ and $\psi$ so that either $\alpha'$ has different signs on $(0,\frac12)$ and $(\frac12,1)$ (which gives a non-dynamically coherent example), or has the same sign on these intervals (which gives a dynamically coherent example that is locally non-uniquely integrable).
\end{lemma}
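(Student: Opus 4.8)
The plan is to construct $\psi$ and $v$ explicitly and then verify the stated asymptotics. First I would fix a North–South map $\psi$ of $S^1$ satisfying (\ref{equation.psi.1})–(\ref{equation.psi.3}); for concreteness it is convenient to take $\psi$ affine on a fixed fundamental domain so that iterates $\psi^{-k}$ near the repelling fixed point $0$ and $\psi^{k}$ near the attracting fixed point $1/2$ behave, up to bounded distortion, like the linear maps $\theta\mapsto \mu^{-k}\theta$ and $\theta\mapsto \sigma^{k}(\theta-\tfrac12)+\tfrac12$ respectively. The freedom left is the choice of $v$ with $v(0)=v(1/2)=0$; I will choose $v$ to be $C^\infty$, supported away from $0$ and $1/2$ except for prescribed behaviour, and in fact it suffices to take $v$ with a single ``bump'' in each of the two fundamental domains, with a sign in each bump that we are free to fix $+$ or $-$ independently. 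This independence of the two signs is exactly what will produce the two cases in the last sentence of the lemma.

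Next I would compute the relevant derivatives. Differentiating (\ref{equation.gamma}) gives $\gamma'(\theta)=\tfrac1\lambda\sum_{k\ge1}\lambda^k (v\circ\psi^{-k})'(\theta) = \tfrac1\lambda\sum_{k\ge1}\lambda^k v'(\psi^{-k}(\theta))\,(\psi^{-k})'(\theta)$. Near $\theta=1/2$ the point $\psi^{-k}(\theta)$ stays near $1/2$ for $k$ up to some $N(\theta)\to\infty$ as $\theta\to1/2$, and on that range $(\psi^{-k})'(\theta)\sim \sigma^{-k}$ up to bounded factors while $v'\circ\psi^{-k}$ is bounded; once $k$ exceeds $N(\theta)$ the orbit has left a neighbourhood of $1/2$ and the terms are uniformly controlled. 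Since $\sigma<\lambda$, the product $\lambda^k\sigma^{-k}=(\lambda/\sigma)^k$ \emph{grows}, so the partial sum up to $k=N(\theta)$ is comparable to $(\lambda/\sigma)^{N(\theta)}\to\infty$; choosing the bump of $v$ in $(1/2,1)$ (or $(0,1/2)$) of a fixed sign guarantees no cancellation in the dominant block, hence $|\gamma'(\theta)|\to\infty$ as $\theta\to1/2$. The symmetric computation for $\beta$ from (\ref{equation.beta}): $\beta'(\theta)=-\tfrac1\lambda\sum_{k\ge0}\lambda^{-k} v'(\psi^{k}(\theta))(\psi^{k})'(\theta)$, and near $\theta=0$ the first $N(\theta)$ iterates $\psi^k(\theta)$ stay near the repeller $0$ with $(\psi^k)'\sim\mu^k$, so the terms have size $\lambda^{-k}\mu^{-k}=(\lambda\mu)^{-k}$ — wait, here one uses instead that the tail beyond $N(\theta)$ contributes the blow-up: $(\psi^k)'$ near $1/2$ decays like $\sigma^{k}$ but $\lambda^{-k}$ grows faster since $\sigma<\lambda$, so $\lambda^{-k}\sigma^{k}\to0$, and one must look again — the correct mechanism is that for $k\le N(\theta)$ one has $\psi^k(\theta)$ near $0$ with $(\psi^k)'(\theta)\sim \mu^{k}$, giving terms $\lambda^{-k}\mu^{k}$; but this is not the blow-up either. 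So the blow-up of $\beta'$ at $0$ comes from a boundary term as in the standard analysis of Lemma \ref{difofbeta}: $\beta$ is only $C^1$ away from $0$ precisely because the series for $\beta'$ diverges at $0$, and choosing $v$ non-flat at the first return of a neighbourhood of $0$ forces $|\beta'(\theta)|\to\infty$. I will write this out carefully using the bounded-distortion estimates; the key point is that $v$ can be chosen so the would-be cancellation does not occur, which is an open condition.

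Having the two limits, partial hyperbolicity of $E^u\oplus E^c_f\oplus E^s_f$ follows from the discussion already carried out in the text preceding the lemma: the splitting is $Df$-invariant by differentiating the cohomological equations, it is continuous on $\T^3$ because the limits just established let us glue $E^c_f(x,1/2)=E^s_A\times\{0\}$ and $E^s_f(x,0)=E^s_A\times\{0\}$ continuously (a line with infinite slope is the vertical line), the angle between $E^s_f$ and $E^c_f$ is bounded away from zero by the $\alpha'$ argument (non-vanishing on a fundamental domain propagates via $\alpha'\circ\psi\cdot\psi'=\lambda\alpha'$, and non-vanishing near the endpoints follows from continuity), and the hyperbolicity estimates (\ref{pointwise.ph}) hold at $\theta=0,1/2$ by (\ref{equation.psi.2})–(\ref{equation.psi.3}) and then, by continuity and compactness, in a neighbourhood that (again by the trivial dynamics on each fundamental domain) covers all of $S^1$. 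Finally, for the last clause: $\alpha'=\gamma'-\beta'$ is, on each of $(0,1/2)$ and $(1/2,1)$, a nowhere-zero continuous function, hence of constant sign on each interval; which sign it takes on each interval is governed by the sign of the corresponding bump of $v$, and since these two bumps were chosen independently we may arrange either ``same sign on both intervals'' or ``opposite signs'', completing the construction. The main obstacle I anticipate is the bounded-distortion bookkeeping needed to show the dominant block of each series really diverges without cancellation — i.e., making ``choose $v$ so there is no cancellation'' into a precise open condition on $v$ — rather than anything conceptual; the rest is assembling estimates already sketched in the surrounding text.
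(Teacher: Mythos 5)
There are two genuine gaps in your plan, and both sit exactly where the paper has to work.

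First, the blow-up of $\beta'$ at $\theta=0$. You set up the correct sum, reach the terms $\lambda^{-k}(\psi^k)'(\theta)\sim\lambda^{-k}\mu^{k}=(\mu/\lambda)^{k}$ for $k$ up to the escape time $N(\theta)$, and then declare ``this is not the blow-up either'' --- but it is: since $\mu>1>\lambda$, the factor $(\mu/\lambda)^{N(\theta)}$ diverges as $\theta\to0$, and combined with the lower bound $|v'(\psi^{N}\theta)|\geq c_0\,d(\psi^{N}\theta,0)\geq c_0^2\epsilon_0$ (which is where $v''(0)\neq0$ enters, via the two-sided comparison $|v'(\theta)|\asymp d(\theta,0)$ near $0$), a single term of the series already gives $|\beta'(\theta)|\geq C\,d(\theta,0)^{-a}$ with $a=1-\log\lambda/\log\mu>0$. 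Your fallback --- appealing to the ``standard analysis of Lemma \ref{difofbeta}'' --- is circular: that lemma only asserts $\beta$ is $C^1$ \emph{away} from $0$; it says nothing about divergence \emph{at} $0$, which is precisely what must be proved here. The paper's mechanism, for both $\gamma'$ at $1/2$ and $\beta'$ at $0$, is that when $v'$ has constant sign on the whole invariant interval every term of the differentiated series has the same sign, so one may discard all terms but one and the surviving term diverges; no distortion bookkeeping beyond the fixed constants $c_0,\epsilon_0$ is needed.

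Second, your choice of $v$ (``a single bump in each of the two fundamental domains, with independently chosen signs'') does not work as stated, and the independence of the two signs is illusory. If the bump is a bump of $v$ itself supported in one fundamental domain $D$, then $v'$ changes sign inside $D$, and as $\theta\to1/2$ the dominant index $k=N(\theta)$ lands at a point of $D$ that sweeps across the zero of $v'$; the sign of $\gamma'(\theta)$ then oscillates, so $\alpha'=\gamma'-\beta'$ vanishes along orbits accumulating on both fixed points and the splitting $E^s_f\oplus E^c_f$ degenerates. If instead you mean a single-signed bump of $v'$ confined to a fundamental domain, then $v$ cannot satisfy $v(0)=v(1/2)=0$ at both endpoints, since $v'$ would have nonzero integral over the interval. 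The paper resolves this tension differently in the two cases. For the non-dynamically coherent example it takes $v'$ of constant sign on each \emph{entire} invariant interval (e.g.\ $v(\theta)=1+\cos2\pi\theta$, so $v'<0$ on $(0,\tfrac12)$ and $v'>0$ on $(\tfrac12,1)$); periodicity then \emph{forces} the two signs to be opposite, which is why this recipe can only produce the ``different signs of $\alpha'$'' case. For the ``same sign'' (locally non-uniquely integrable) case a sign change of $v'$ inside the fundamental domain is unavoidable, and the paper controls the resulting cancellation by choosing $v$ with odd symmetry about $\tfrac12$, making $\psi$ affine near the fixed points, and taking $\sigma/\lambda$ close to $1$ so that the growing geometric block $C_1\sum_{n=0}^{N}(\sigma/\lambda)^{N-n}$ dominates the bounded negative tail $C_2C_3/(1-\lambda)$. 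Your proposal contains no substitute for this quantitative step, and ``choose $v$ so there is no cancellation'' cannot be made into an open condition without it. The remaining parts of your argument (invariance, continuity at the fixed tori from $|\gamma'|,|\beta'|\to\infty$, non-vanishing of $\alpha'$ propagated by $\alpha'\circ\psi\cdot\psi'=\lambda\alpha'$, and partial hyperbolicity by checking the inequalities at $\theta=0,\tfrac12$) do match the paper.
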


Fix some $\epsilon_0>0$ and $0<c_0<1$ such that for $\theta$
with $|\theta-\frac12|\leq\epsilon_0$ or $|\theta|\leq\epsilon_0$ and
for $k\geq 0$ respectively, 
\begin{eqnarray}
\label{compderf}c_0\leq \frac{(\psi^k)'(\theta)}{\sigma^k}\leq
1/c_0\;&\mbox{ and
}&\; c_0\leq \frac{(\psi^{-k})'(\theta)}{\mu^{-k}}\leq 1/c_0 \\
\label{compaproxf}c_0\leq\frac{d(\psi^k(\theta),\frac12)}{\sigma^kd(\theta,\frac12)}\leq
1/c_0\;&\mbox{ and
}&\;c_0\leq\frac{d(\psi^{-k}(\theta),0)}{\mu^{-k}d(\theta,0)}\leq 1/c_0
\end{eqnarray}
Observe that we can take $\epsilon _0$ as close to $0$ as we want
by choosing $c_0$ appropriately. These inequalities follow for any $\psi$ satisfying equations (\ref{equation.psi.2}) and (\ref{equation.psi.3}).\par
We treat the non-dynamically coherent case and the locally non-uniquely integrable case separately: 
\begin{subsection}{The non-dynamically coherent example}\label{sub.nondynco}
\begin{proof}
The non-dynamically coherent case is simpler, just take $v$ such
that
\begin{eqnarray}\label{first}
v'(\theta)<0\mbox{ on }\left(0,\tfrac12\right)&\mbox{and}&v'(\theta)>0\mbox{ on
}\left(\tfrac12,1\right);\\
v''(0)\neq 0&\mbox{and}&v''\left(\tfrac12\right)\neq 0.
\end{eqnarray}
For example $v(\theta)=1+\cos 2\pi\theta$. And take any Morse-Smale map $\psi$ as already chosen.\par
This example readily satisfies the desired properties: 
We only need to show that $E^c_{f}$ and $E^s_{f}$ are continuous, that is
$\lim_{\theta\to 1/2} |\gamma'(\theta)|=\infty$ and $\lim_{\theta\to 0}
|\beta'(\theta)|=\infty$.


In our case, i.e. $v'(\theta)<0$ for $0<\theta<\frac12$ and
$v'(\theta)>0$ for $\frac12<\theta<1$ with $v''(0)\neq 0$ and
$v''(\frac12)\neq 0$, we have that $v'(0)=0$ and $v'(\frac12)=0$. So for
$\theta$ with $d(\theta,0)\leq \epsilon_0$ or $d(\theta,1/2)\leq
\epsilon_0$ we have respectively,
\begin{eqnarray}\label{compv}
c_0\leq\frac{|v'(\theta)|}{d(\theta,0)}<1/c_0\;&\mbox{ and
}&\;c_0\leq\frac{|v'(\theta)|}{d(\theta,\frac12)}<1/c_0.
\end{eqnarray}


Let us see that $\gamma'(\theta)\to\infty$ as $\theta\to\frac12$. Given $\theta$ with $d(\theta,\frac12)<\epsilon_0$, let
$N=N(\theta)$ be the largest positive integer such that
$d(f^{-N}(\theta),\frac12)\leq\epsilon_0$. We also assume that $c_0$ is
taken so that
\begin{eqnarray*}
d(f^{-N}(\theta),\tfrac12)&\geq&
c_0d(f^{-1}(f^{-N}(\theta)),f^{-1}(\tfrac12))\\
&=&c_0d(f^{-(N+1)}(\theta),\tfrac12)>c_0\epsilon_0.
\end{eqnarray*}

If $d(\theta,\frac12)\leq \epsilon_0$ and $v'(\theta)>0$, then
\begin{eqnarray*}
\gamma'(\theta)&\geq& \lambda^{(N-1)}v'(f^{-N}(\theta))(f^{-N})'(\theta)
=\lambda^{(N-1)}\frac{v'(f^{-N}(\theta))}{(f^N)'(f^{-N}(\theta))}\\
&\geq& \lambda^{(N-1)}c_0 d(f^{-N}(\theta),1/2)c_0\sigma^{-N}\geq
\lambda^{(N-1)}c_0 c_0\epsilon_0c_0\sigma^{-N}
=\frac{c_0^3\epsilon_0}{\lambda}\left(\frac{\lambda}{\sigma}\right)^N.
\end{eqnarray*}
Finally, if $\theta$ approaches $\frac12$ then $N(\theta)$ tends to
$+\infty$ and hence $\gamma'(\theta)\to +\infty$. Indeed $N(\theta)$ grows essentially as $$\frac{\log d(\theta,\tfrac12)}{\log\sigma}.$$
In case $v'(\theta)<0$, the
computation is the same and we get that $\gamma'(\theta)\to -\infty$ as
$\theta$ approaches $\frac12$. Therefore, for some constants $C>0$
and $\rho=1-\frac{\log\lambda}{\log\sigma}>0$, $\rho<1$, we have
$$|\gamma'(\theta)|\geq Cd(\theta,\tfrac12)^{-\rho}.$$

\begin{figure}
\begin{center}
\psfrag{c}{\tiny{$\gamma^c$}}\psfrag{sa}{\tiny{$E^s_A$}}\psfrag{0}{\tiny{$0$}}
\psfrag{12}{\tiny{$1/2$}}\psfrag{1}{\tiny{$1$}}\psfrag{s}{\tiny{$W^s$}}\psfrag{cs}{\tiny{$E^{cs}$}}
\includegraphics[scale=0.7, width=12cm]{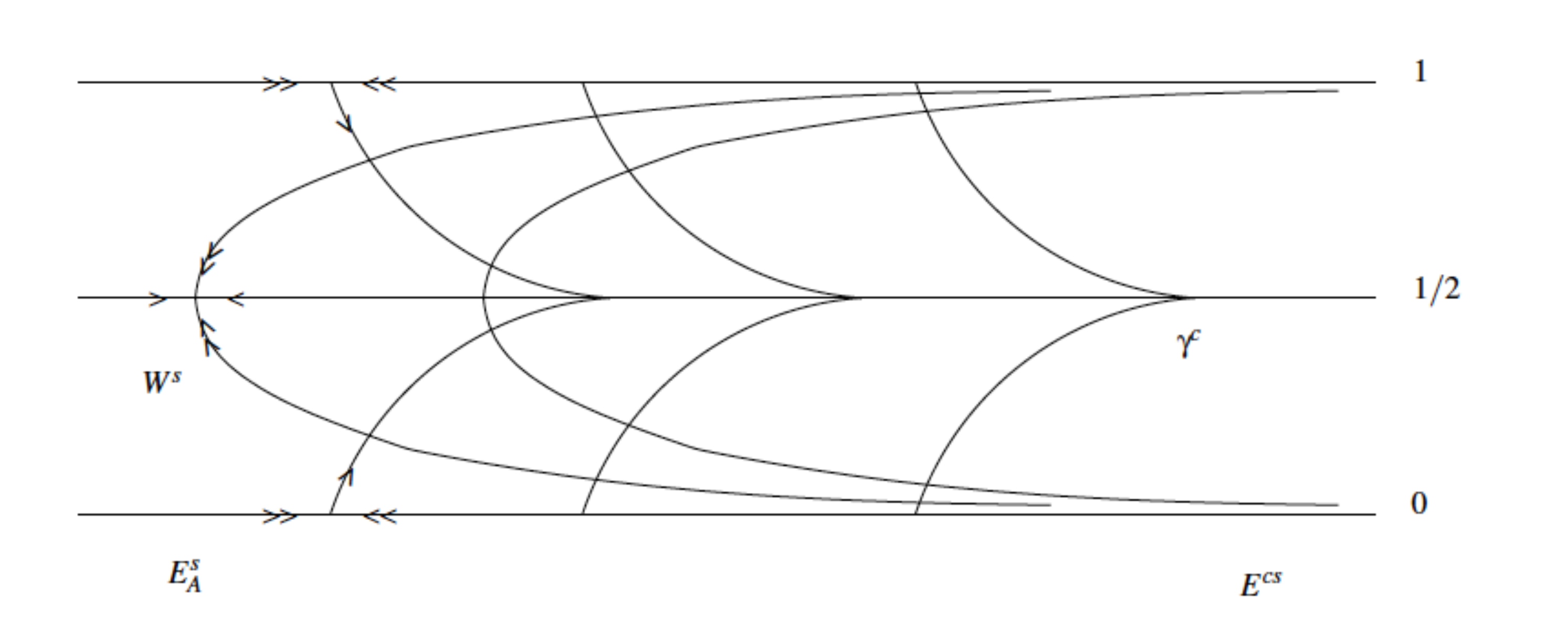}\caption{\label{nonintegrability}The non-dynamically coherent example}
\end{center}
\end{figure}

The same computation gives that $\lim_{\theta\to 0}
|\beta'(\theta)|=\infty$. In this case, we can see that for
$a=1-\frac{\log\lambda}{\log\mu}>2$
$$|\beta'(\theta)|\geq Cd(\theta,0)^{-a}.$$

To see that $E^c$ has no foliation tangent to it, let us explicitly compute the
semiconjugacy. Remember that the semiconjugacy is $h:\T^2\times
S^1\to\T^2$, $h(x,\theta)=x-\gamma(\theta)e^s$. $h$ satisfies $h\circ f=A\circ h$ since
$\gamma$ satisfies the equation \ref{twisted.cohomological.equation}. \par
Now,

$h^{-1}(h(x_0,\theta_0))=\{(x_0+\gamma(\theta)e^s-\gamma(\theta_0)e^s,\theta)\}$
so that
$$h^{-1}(h(x_0,\theta_0))=(h(x_0,\theta_0),0)+\{(\gamma(\theta)e^s,\theta)\}.$$

Recalling that $E^c_{f}(x,\theta)=[(\gamma'(\theta)e^s,1)]$ (Equation (\ref{E.c})), it follows that 
 $h^{-1}(h(x_0,\theta_0))$ are the integral curves
of $E^c$. From the fact that $E^c$ is $C^1$ for $\theta\ne\frac12$ we get that
$E^c$ is uniquely integrable on this domain. On the other hand, on
$\theta=\frac12$ $E^c=E^s_A$, hence the lines which are parallel
to $E^s_A$ inside the torus $\theta=\frac12$ are integral curves of
$E^c$. See Figure \ref{nonintegrability}.
Finally, close to $\theta=\frac12$ we have two situations: either
$\theta\in (0,\frac12)$, and $\gamma'$ is negative, or else 
$\theta\in (\frac12,1)$ and $\gamma'$ is positive. This implies the curves are like in the 
Figure \ref{nonintegrability}, what precludes the integrability of $E^{c}$.\end{proof}

\end{subsection}

\begin{subsection}{A locally non-uniquely integrable example}\label{subsection.nonuniquely} 
\begin{proof}
The construction of a dynamically coherent example which is locally non-uniquely integrable is subtler than the previous case. In our construction, we need to choose $v$ and $\psi$ more care carefully.\par
Firstly, we shall take $\mu$, appearing in Equation (\ref{equation.psi.2}), very close to 1; and $\sigma$, in the same Equation, in such a way that $\frac\sigma\lambda$ is very close to 1 too. Secondly, in order to simplify the calculations, we  choose $\psi$ to be ``symmetric" with respect to $\frac12$. We also choose  $\psi$ in such a way that it is affine with slope $\sigma$ in a neighborhood of $\frac12$   and it is affine with slope $\mu$ in a neighborhood of $1$. Moreover, we can suppose that $\psi$ is  affine outside a fundamental domain $D=(\theta_0,v(\theta_0))$  that depends only on $\mu$ (recall that $\mu$ was already chosen and  $\s<\lambda$).\par

Now we have to define $v$. Let us choose $v$ with odd symmetry (that is, such that $v(-\theta)=-v(\theta)$) with respect to $\frac12$  and in such way $v'(\theta)>0$ for  $\theta \in [\frac12, \theta^*)$ and $v'(\theta)<0$ for  $\theta \in ( \theta^*, 1]$ where $\theta^*$ is a point belonging to $D$.\par

Now let us show that that the bundles $E^c$ and $E^s$ as
defined above are continuous. As it was shown in the first part of the section, the fact that the angle between $E^{s}$ and $E^{c}$ is everywhere non-zero will follow from invariance and continuity.\par

The steps to show that 
$\lim_{\theta\to 1/2} |\gamma'(\theta)|=\infty$ and $\lim_{\theta\to 0}
|\beta'(\theta)|=\infty$, are very 
similar to those in the preceding subsection, let us make the computations for $\gamma'$. 

%

Suppose that $\theta$ is very close to $\frac12$ and let $N=N(\theta )$
be such that $f'(f^{-i}(\theta))=\s$ for $i=0,\dots,N$. Then
\begin{eqnarray*}
\gamma'(\theta)&=&
\frac{1}{\lambda}\sum_{k=0}^N\lambda^k v'(\psi^{-k}(\theta))(\psi^{-k})'(\theta)+\frac{1}{\lambda}\sum_{k=N+1}^\infty\lambda^k v'(\psi^{-k}(\theta))(\psi^{-k})'(\theta)\\
&\geq& C_1\left(\frac{\lambda}{\sigma}\right)^N\sum_{n=0}^{N} \left(\frac{\sigma}{\lambda}\right)^{N-n} - C_2C_3 \left(\frac{\lambda}{\sigma}\right)^N\sum_{n=N+1}^{\infty}\left(\frac{\lambda}{\mu}\right)^{n-N},\
\end{eqnarray*}

where $C_1$ is a lower bound for $v'$ in its positive region, that is, $v'(\theta)>C_1$ for $\theta \in [\frac12, \theta_0]$, $C_2$ is an upper bound of $|v'|$ in $[\theta_0, 1]$ and $C_3$ is an upper bound of $f'$ in $D$. Observe that the constants $C_i$, $i=1,2,3$, can be taken independent of $\s$ (and of $f$) if $\s$ is close enough to $\lambda$.

Now, choosing $\s$ close enough to $\lambda $ and $N$ sufficiently large (equivalently $\theta $ near enough $\frac12$) we obtain

\begin{eqnarray*}
\gamma'(\theta)&\geq&
\frac1\lambda\left(\frac{\lambda}{\sigma}\right)^N \left(C_1\sum_{n=0}^{N} \left(\frac{\sigma}{\lambda}\right)^{N-n}-C_2C_3\sum_{n=N+1}^{\infty}\left(\frac{\lambda}{\mu}\right)^{n-N}\right)
\\&\geq&
\frac1\lambda\left(\frac{\lambda}{\sigma}\right)^N \left(C_1\frac{1-\left(\frac{\sigma}{\lambda}\right)^{N+1}}{1-(\frac{\sigma}{\lambda})}- C_2C_3\frac1{1-\lambda} \right)\\
\end{eqnarray*}

We can choose $\s$ such that for a large enough $N$, $$C_1\frac{1-(\frac{\sigma}{\lambda})^{N+1}}{1-(\frac{\sigma}{\lambda})}- C_2C_3\frac1{1-\lambda}>0.$$
This implies that $\gamma'$ is positive for $\theta $ close to $\frac12$. The symmetries of $v$ and $\psi$ imply that the same is true for $\theta$ smaller than $\frac12$. The multiplying factor  $\frac1\lambda(\frac{\lambda}{\sigma})^N $ clearly forces  $\gamma'(\theta)\rightarrow+\infty$ as $\theta\rightarrow\frac12$.\newline\par

Finally, as in the preceding subsection, we have a semiconjugacy $h$ and
$$h^{-1}(h(x_0,\theta_0))=(h(x_0,\theta_0),0)+\{(\gamma(\theta)e^s,\theta)\}.$$
Since the sign of $\gamma'$ is the same on both sides of $\theta=\frac12$ this partition is a foliation.

We leave to the reader the proof that $\lim_{\theta\to 0}
|\beta'(\theta)|=\infty$  because it is very similar to the proof for $\gamma'$.
\end{proof}
\end{subsection}

\section{Robustness of the examples and some conclusions}\label{sec.concl}

\subsection{Robustness} In the first part of this section we shall show that the non-dynamical coherence of our examples is a robust property. This is essentially a consequence of the presence of a normally hyperbolic torus tangent to the bundle $E^{cu}$.

\begin{theorem}\label{teo.robdyncon}
There exists an open set $\mathcal{V}\subset \operatorname{Diff}^1(\mathbb{T}^3)$ such that, $\forall f \in \mathcal{V}$, $f$ is a non-dynamically coherent partially hyperbolic  diffeomorphism. Moreover, $f$ presents Reeb-like strips of the strong stable foliation inside the center stable leaves.
\end{theorem}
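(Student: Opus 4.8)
The plan is to show that the non-dynamical coherence exhibited in Section \ref{sub.nondynco} is stable under $C^1$-perturbations, the key mechanism being normal hyperbolicity of the $cu$-torus $T^{cu}=\{\theta=\tfrac12\}$ together with the Reeb-strip structure of $\mathcal{W}^s$ inside the center-stable leaves. First I would fix $f_0$ to be one of the diffeomorphisms produced by Lemma \ref{fandv} in the non-dynamically coherent case (e.g.\ with $v(\theta)=1+\cos 2\pi\theta$), and take $\mathcal{V}$ to be a sufficiently small $C^1$-neighborhood of $f_0$ in $\operatorname{Diff}^1(\T^3)$. Partial hyperbolicity is $C^1$-open, so every $f\in\mathcal V$ is partially hyperbolic with a one-dimensional center bundle and the splitting varies continuously with $f$. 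The torus $T^{cu}$ is normally hyperbolic (it is an attractor, being transverse to $\mathcal W^s$ and invariant, with $f_0$ uniformly contracting toward it in the $E^s$-direction); hence by Hirsch--Pugh--Shub \cite{hps} it persists: for $f\in\mathcal V$ there is an $f$-invariant $C^1$ torus $T^{cu}_f$, $C^1$-close to $T^{cu}$, tangent to $E^c_f\oplus E^u_f$, and still an attractor.

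Next I would analyze the strong stable foliation $\mathcal W^s_f$ near $T^{cu}_f$. Since $T^{cu}_f$ is a normally hyperbolic attracting torus transverse to $\mathcal W^s_f$, each strong stable leaf limits onto $T^{cu}_f$, and the stable manifold of $T^{cu}_f$ is an open neighborhood (all of $\T^3$, in fact, by the global dynamics). The structure preserved from the model is the following: inside a putative center-stable leaf one sees the strong stable leaves accumulating on the torus from both sides of the two fixed circles ($\theta=0$ and $\theta=\tfrac12$) in the ``Reeb'' pattern of Figure \ref{nonintegrability} — a strip bounded by two strong stable leaves, both asymptotic to the same compact leaf ($\mathcal W^s$ restricted to $T^{cu}$), with the $E^c$-curves turning to become tangent to $E^s_A$ on the torus. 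I would make this precise: the two solutions $\gamma,\beta$ of the twisted cohomological equation have derivatives blowing up with \emph{opposite-sign–governed} geometry at $\theta=\tfrac12$, forcing any arc tangent to $E^{cs}$ and crossing $\theta=\tfrac12$ transversally to fail to stay in a single leaf; this is exactly the obstruction computed in Section \ref{sub.nondynco}, and it is expressed in terms of the topology of $\mathcal W^s_f|_{W^{cs}}$, hence survives perturbation. The precise open condition I would isolate is: \emph{there is a strong stable leaf $\ell\subset W^{cs}(x)$ and a point $y\in\ell$ such that the two ``sides'' of $\ell$ near $T^{cu}_f$ lie in a single local center-stable sheet, producing a Reeb strip} — a condition checkable via the local product structure of $\mathcal W^s_f$ and $\mathcal W^u_f$ and robust by continuity of the bundles and leaves.

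The core argument for non-dynamical coherence is then a contradiction: suppose $f\in\mathcal V$ is $cs$-dynamically coherent, with invariant foliation $\mathcal F^{cs}$ tangent to $E^s_f\oplus E^c_f$. The torus $T^{cu}_f$ is transverse to $E^s_f$, hence transverse to the leaves of $\mathcal F^{cs}$, so $\mathcal F^{cs}$ induces a one-dimensional foliation on $T^{cu}_f$ by intersection with $E^c_f|_{T^{cu}_f}$; and since $T^{cu}_f$ is an attractor, taking $f^n$ and the attracting convergence of nearby $cs$-leaves toward $T^{cu}_f$ would force a $cs$-leaf to be asymptotic to $T^{cu}_f$ in a way incompatible with the Reeb strip: the leaf would have to contain two strong stable leaves asymptotic to the \emph{same} compact circle from opposite sides, i.e.\ it would contain a Reeb component of $\mathcal W^s_f$, and integrating $E^c_f$ across this component contradicts the holonomy of $\mathcal W^s_f$ (an arc tangent to $E^c_f$ transverse to $\mathcal W^s_f$ cannot close up / must escape the strip). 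I would phrase this cleanly using that $\mathcal W^s_f$ subfoliates any $cs$-foliation, so $\mathcal F^{cs}$ restricted to a leaf is a foliated surface carrying $\mathcal W^s_f$ as a subfoliation, and a Reeb strip of a codimension-one foliation of a surface cannot lie inside a leaf that is a plane or cylinder with the required $E^c_f$-transversal structure. The main obstacle, and where I would spend the most care, is turning the geometric picture ``Reeb-like strip'' into a \emph{quantitative, perturbation-robust} statement — specifically, showing that the blow-up estimates $|\gamma'(\theta)|\geq C\,d(\theta,\tfrac12)^{-\rho}$ and $|\beta'(\theta)|\geq C\,d(\theta,0)^{-a}$, together with the sign change of $\gamma'$ across $\theta=\tfrac12$, correspond to a topological configuration of $\mathcal W^s_f$ inside $W^{cs}$ (two strong-stable leaves bounding a strip, both forward-asymptotic to $\mathcal W^s_f\cap T^{cu}_f$) that is preserved by $C^1$-small perturbations; once that robust topological invariant is in hand, the contradiction with $cs$-coherence, and hence with dynamical coherence, is formal and $\mathcal V$ is the desired open set.
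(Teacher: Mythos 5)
Your proposal gets the setup right (a $C^1$-neighborhood of the example of Subsection \ref{sub.nondynco}, openness of partial hyperbolicity, persistence of the normally hyperbolic attracting torus via \cite{hps}), but the two steps that actually carry the proof have genuine gaps. First, the robustness mechanism. You say the relevant configuration of $\mathcal{W}^s_g$ and of the curves tangent to $E^c_g$ is ``robust by continuity of the bundles and leaves,'' but $C^0$-continuity of $E^c_g$ in $g$ gives no control over its integral curves: a merely continuous line field can have non-unique, badly behaved integral curves, and that is exactly the delicate point of the whole subject. The paper supplies the missing mechanism from the Axiom A plus strong transversality structure of the unperturbed map: the nonwandering set consists of the attracting torus $T_1$ ($\theta=\tfrac12$) and the repelling torus $T_0$ ($\theta=0$); both persist ($T_1$ by normal hyperbolicity, $T_0$ by hyperbolicity of the basic set); and outside the two tori the center curves are precisely the leaves of $W^s(T_1)\cap W^u(T_0)$. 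These invariant manifolds \emph{do} vary continuously with $g$, the resulting center curves have uniformly bounded length (they are contracted both forward and backward), and by domination they land on $T_1^g$ tangentially with the same orientations as for $f$, reproducing the cusp configuration of Figure \ref{nonintegrability}. Your proposal never identifies the center curves with any persistent dynamically defined object, so the ``quantitative, perturbation-robust statement'' you defer to the end is the actual content of the proof and cannot be extracted from continuity of the bundles alone.

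Second, your contradiction argument aims at the wrong coherence. You assume $g$ is $cs$-dynamically coherent and try to contradict this via the Reeb strip; but the example and all its small perturbations \emph{are} $cs$-dynamically coherent: the foliation $\mathcal{W}^{cs}$ by cylinders is normally hyperbolic and plaque expansive, hence persists by \cite{hps}, and the theorem's second assertion is precisely that $\mathcal{W}^s_g$ restricted to these (existing) $cs$-leaves is Reeb-like. So ``a Reeb component of $\mathcal{W}^s_g$ inside a $cs$-leaf'' is not a contradiction --- it is the conclusion. The incoherence to be proved is the non-existence of an invariant foliation tangent to $E^c\oplus E^u$ (equivalently, the failure of integrability of $E^c$ at the attracting torus), and it follows from the cusp-shaped tangency of the persistent center curves with $T_1^g$, not from any obstruction inside the $cs$-leaves. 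Relatedly, your Reeb-strip discussion omits the repeller $T_0^g$ entirely, although the strip is obtained by cutting a $cs$-cylinder along $W^s_0=W^{cs}\cap T_0^g$, and the Reeb behaviour comes from every other strong stable leaf in that cylinder meeting $T_1^g$ while having both ends asymptotic to $W^s_0$.
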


We also have  an analogous result for the non-locally uniquely integrable dynamically coherent example.

\begin{theorem}\label{teo.robnonuniq}
There is an open set $\mathcal{W}\subset \operatorname{Diff}^1(\mathbb{T}^3)$ such that, $\forall f \in \mathcal{W}$, $f$ is a dynamically coherent partially hyperbolic diffeomorphism but its center bundle is not locally uniquely integrable.
\end{theorem}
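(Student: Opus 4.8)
The plan is to take $\cW$ to be a small $C^{1}$-neighbourhood of the diffeomorphism $f_{*}$ of Section~\ref{subsection.nonuniquely} and to check that the three required properties of an $f\in\cW$ persist. Partial hyperbolicity is $C^{1}$-open, so the work is in propagating dynamical coherence and the failure of local unique integrability, and the robust structure that makes this possible is the attracting torus. For $f_{*}$ this is $T_{*}=\{\theta=1/2\}$: it is tangent to $E^{cu}_{f_{*}}$, the restriction $f_{*}|_{T_{*}}$ equals the linear Anosov map $A$ (since $v(1/2)=0$), and since $\sigma<\lambda$ it is a $1$-normally hyperbolic attractor. Likewise $N_{*}=\{\theta=0\}$ is a repelling hyperbolic torus, tangent to $E^{s}_{f_{*}}\oplus E^{u}_{f_{*}}$ and hence \emph{not} to $E^{cs}_{f_{*}}$, and both sit in the filtrating region $M=\T^{2}\times[\varepsilon,1-\varepsilon]$, for which $f_{*}(M)\subset\operatorname{int}M$, so that $\Omega(f_{*})=T_{*}\cup N_{*}$.

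After shrinking $\cW$, the Hirsch--Pugh--Shub theorem \cite{hps} (as in the proof of Theorem~\ref{teo.robdyncon}) produces, for every $f\in\cW$, a unique $f$-invariant $C^{1}$ torus $T_{f}$ that is $C^{1}$-close to $T_{*}$, still normally attracting and with $f|_{T_{f}}$ Anosov; because $T_{*}$ is tangent to the dominated subbundle $E^{cu}_{f_{*}}$, the continuation $T_{f}$ is tangent to $E^{cu}_{f}$. The $1$-dimensional $f$-invariant subbundle $E^{c}_{f}|_{T_{f}}\subset TT_{f}$ is $C^{0}$-close to the uniformly contracted $E^{s}_{A}$, hence is itself uniformly contracted, hence coincides with the stable bundle of the Anosov map $f|_{T_{f}}$; thus $E^{c}_{f}$ restricted to $T_{f}$ is uniquely integrable inside $T_{f}$, its integral curves being the smooth stable leaves $\sigma_{f}(\cdot)\subset T_{f}$ of $f|_{T_{f}}$. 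The filtration $M$ persists, so $\Omega(f)=T_{f}\cup N_{f}$ with $N_{f}$ (the continuation of the hyperbolic set $N_{*}$) a repelling torus; arguing as for $f_{*}$, neither $T_{f}$ nor $N_{f}$ is tangent to $E^{cs}_{f}$, so $f$ has no periodic $2$-torus tangent to $E^{cs}_{f}$.

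Next I would establish dynamical coherence of $f$, using the branching foliations of Burago--Ivanov, available for partially hyperbolic diffeomorphisms of $\T^{3}$ by the work of Hammerlindl and Potrie \cite{potrie}, \cite{hp}. Since $f$ has no periodic $cs$-torus, its $cs$-branching foliation is a genuine foliation $\cW^{cs}_{f}$, whence $cs$-dynamical coherence. The $cu$-side is more delicate, because $T_{f}$ itself is a periodic $cu$-torus; but for $f_{*}$ the $cu$-foliation (the $h$-preimages of the unstable lines of $A$) is a genuine foliation $\cW^{cu}_{f_{*}}$ of which $T_{*}$ is \emph{not} a leaf --- $T_{*}$ is only an extra integral surface of $E^{cu}_{f_{*}}$, tangent along an unstable leaf of $A$ to each $\cW^{cu}_{f_{*}}$-leaf it meets, which is possible exactly because $E^{cu}_{f_{*}}$ is Hölder but not $C^{1}$ along $T_{*}$. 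I would show that this benign configuration of the $cu$-branching foliation near the attracting torus is $C^{1}$-robust, producing, for each $f\in\cW$, a genuine $f$-invariant foliation $\cW^{cu}_{f}$ tangent to $E^{cu}_{f}$ with $T_{f}$ not one of its leaves. \emph{This is the step I expect to be the main obstacle}: controlling the $cu$-foliation near the attracting torus uniformly over $\cW$, i.e. showing that no $f\in\cW$ develops the Reeb-like strips inside center-stable leaves that, in Theorem~\ref{teo.robdyncon}, are responsible for non-coherence.

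Finally, granting dynamical coherence, set $\cW^{c}_{f}=\cW^{cs}_{f}\cap\cW^{cu}_{f}$ and fix a periodic point $p\in T_{f}$ of $f|_{T_{f}}$, so the leaves below are invariant under a power of $f$. Through $p$ we have the integral arc $\sigma_{f}(p)\subset T_{f}$ of $E^{c}_{f}$. On the other hand $\cW^{c}_{f}(p)=\cW^{cs}_{f}(p)\cap L_{f}(p)$, where $L_{f}(p):=\cW^{cu}_{f}(p)\neq T_{f}$; the curve $L_{f}(p)\cap T_{f}$ is invariant under a power of $f$, hence is the stable or the unstable leaf of $f|_{T_{f}}$ at $p$, and it must be the \emph{unstable} leaf $m_{f}(p)$ --- were it $\sigma_{f}(p)$, the leaf $\cW^{c}_{f}(p)$ would contain and hence equal $\sigma_{f}(p)\subset T_{f}$, forcing $\cW^{cu}_{f}(p)$ to agree with $T_{f}$ near $p$, contrary to the previous paragraph. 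Since $\cW^{cs}_{f}(p)$ is transverse to $T_{f}$ and meets it exactly along $\sigma_{f}(p)$, while $\sigma_{f}(p)$ and $m_{f}(p)$ cross transversally inside $T_{f}$ and only at $p$, we get $\cW^{c}_{f}(p)\cap T_{f}=\{p\}$ locally. Hence a short sub-arc of $\sigma_{f}(p)$ about $p$ is an embedded arc through $p$ tangent to $E^{c}_{f}$ and not contained in the leaf $\cW^{c}_{f}(p)$, so $E^{c}_{f}$ is not locally uniquely integrable at $p$. (The underlying reason, and the source of robustness, is that on the surface $\cW^{cs}_{f}(p)$ the line field $E^{c}_{f}$ is Hölder of exponent $1-\log(\text{center rate})/\log(\text{normal rate})<1$ transversally to $\sigma_{f}(p)$, a non-Lipschitz behaviour forcing non-uniqueness, with an exponent depending continuously on $f$.) Together with the robust dynamical coherence this gives the open set $\cW$.
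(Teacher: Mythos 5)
Your argument breaks down exactly where you say it might: you have no proof of $cu$-dynamical coherence for the perturbation $g$. Since $T_g$ is itself a periodic torus tangent to $E^{cu}_g$, you cannot invoke an ``absence of periodic tori'' criterion on that side, and your plan to show that the $cu$-branching foliation remains a genuine foliation with $T_g$ not a leaf is left entirely unexecuted. The paper closes this gap by a much shorter and completely different route, which you missed: for the unperturbed map the \emph{center} foliation $\mathcal{W}^c$ is a foliation by circles (the fibers $h^{-1}(h(x_0,\theta_0))$, which are closed curves precisely because $\gamma'$ has the same sign on both sides of $\theta=\tfrac12$); this foliation is normally hyperbolic with compact leaves, and the induced dynamics on the leaf space is conjugate to the Anosov automorphism $A$, hence expansive, so $\mathcal{W}^c$ is plaque expansive. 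A single application of \cite{hps} then gives, for every $g$ in a $C^1$-neighborhood, partial hyperbolicity, dynamical coherence, and a persistent normally hyperbolic attracting torus tangent to $E^{cu}_g$. This replaces your entire branching-foliation discussion and, in particular, disposes of the $cu$-side you could not control.

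Your final paragraph on the failure of local unique integrability is essentially the paper's argument, only more elaborate than necessary. Once the center leaves of $g$ are known to be circles close to those of $f_*$ (each crossing the torus), while $T_g$ is tangent to $E^{cu}_g$ and therefore carries through each of its points a complete immersed line tangent to $E^c_g$, no center leaf can be contained in $T_g$ and those lines are integral curves of $E^c_g$ not contained in any center leaf; non-local-unique-integrability follows at every point of $T_g$, with no need to pass to a periodic point, to decide whether $\mathcal{W}^{cu}_g(p)\cap T_g$ is the stable or unstable leaf of $g|_{T_g}$, or to invoke the H\"older-exponent heuristic. As written, though, the proposal is incomplete because the $cu$-coherence step is asserted rather than proved.
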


Let us first show Theorem \ref{teo.robnonuniq}, since it is easier.

\begin{proof}[Proof of Theorem \ref{teo.robnonuniq}]
 Let $f$ be  one of the examples constructed in Subsection \ref{subsection.nonuniquely}. The torus corresponding to $\theta =\frac12$ is a hyperbolic attractor, it is normally hyperbolic and it is tangent to the bundle $E^{cu}$. Moreover, the center foliation $\mathcal{W}^c$ is also normally hyperbolic with compact leaves and $f$ induces an expansive homeomorphisms in the space of center leaves (conjugated to $A$). In particular, $\mathcal{W}^c$ is plaque expansive. Then, if $g$ is close enough to $f$, $g$ is partially hyperbolic, dynamically coherent (see \cite{hps}) and has a  transitive hyperbolic attractor $T$ that is (diffeomorphic to) a torus and tangent to $E^{cu}_{g}$. Clearly, no center leaf of $g$ can be contained in $T$ while for each point of $T$ there is a complete immersed line tangent to $E^c$ and contained in $T$. This implies that $E^c$ is not locally uniquely integrable and proves the theorem.

\end{proof}

Now, we shall give the proof of Theorem \ref{teo.robdyncon}.
It is a bit more involved than the proof of Theorem \ref{teo.robnonuniq} although the main idea is again that the presence of a center-unstable torus precludes dynamical coherence.

\begin{proof}[Proof of Theorem \ref{teo.robdyncon}]
Let $f$ be  one of the examples constructed in the Subsection \ref{sub.nondynco}.  Observe that $f$ satisfies Axiom A and the strong transversality condition and its nonwandering set consists of the tori corresponding to $\theta=0$ and $\theta=\frac12$. Let $T_1$ be the hyperbolic attractor corresponding to $\theta =\frac12$.  $T_1$ is a normally hyperbolic torus tangent to $E^{cu}$. Let $T_0$ be the hyperbolic repeller corresponding to $\theta =0(=1)$. Observe that even though $T_0$ is not normally hyperbolic, due to its hyperbolicity, it persists under perturbations. This means that a diffeomorphism $g$ close enough to $f$ has a hyperbolic repeller $T_0^g$ homeomorphic to $T_0$. Moreover, since $T_0^g$ is a hyperbolic repeller, it is foliated by its stable manifolds that coincide with the strong stable manifolds of its points. Of course, $g$ also has a hyperbolic attractor $T_0^g$ that is normally hyperbolic and tangent to the bundle $E^{cu}_{g}$.

It is not difficult to see that outside the nonwandering set (the two tori) the center leaves coincide with the intersection of stable manifolds of the attractor and unstable manifolds of the repeller. The continuos variation of these foliations implies that, for a small enough perturbation, the center leaves of $g$ are $C^1$-close to the center leaves of $f$ in the complement of a (small) neighborhood of the two tori. Iterating these curves for the future and the past we obtain the center foliation in the complement of the tori. The length of the center curves obtained in this way is bounded because they are exponentially contracted for the future and the past. The domination of the partially hyperbolic splitting implies that this curves are tangent to $T_1$ (if we add the limit point in this attracting  torus) with the same orientation than the center leaves of $f$. This shows that $g$ is nondynamically coherent. 

Consider also the center-stable foliation $\mathcal{W}^{cs}$ of $f$. Although $f$ is not, $\mathcal{W}^{cs}$ is dynamically coherent. Moreover, $\mathcal{W}^{cs}$ is a normally hyperbolic  foliation by cylinders and it is plaque expansive. Plaque expansiveness is a consequence of the fact that $\mathcal{W}^{cs}$ projects onto the stable foliation of a hyperbolic homeomorphism under the quotient induced by the partition formed by the center circles (other way to obtain plaque expansiveness is to observe that $\mathcal{W}^{cs}$ is $C^1$, even $C^\omega$, see \cite{hps}). Then, thanks to \cite{hps} again, we obtain that $g$ has an invariant center-stable foliation $\mathcal{W}^{cs}_{g}$ whose leaves are cylinders. Any leaf $W^{cs}$ of $\mathcal{W}^{cs}_{g}$ has a stable line  $W^s_0=W^{cs}\cap T_0$. Cutting  $W^{cs}$ along $W^s_0$ we obtain a strip where the strong stable foliation is Reeb-like. The reason of this is that the strong stable foliation of $g$ is very close to the strong stable foliation of $f$ in compact parts. Then, the strong stable foliation of $g$ has the same shape as the strong foliation of $f $ except in a very small neighborhood of $W^s_0$ (the size of the neighborhood depends on closeness of $g$ to $f$). See figure \ref{nonintegrability} in page \pageref{nonintegrability}. Since all strong stable manifolds that are not in $T_0$  intersect $T_1$, and $T_0$ is a repeller we have that these strong manifolds are asymptotic to $W_0^s$. This implies that the restriction of the strong stable foliation to each center stable is Reeb-like and finishes the proof of the Theorem.

\end{proof}

\subsection{Connected components of partially hyperbolic diffeomorphisms}

In this subsection we prove that the set of diffeomorphisms homotopic to $A\times id$ in $\mathbb{T}^2\times\mathbb{S}^1=\mathbb{T}^3$ is not connected.
To be more precise, suppose that $A$ is hyperbolic automorphisms of $\mathbb{T}^2$ and call $\mathcal{PH}_A\subset\operatorname{Diff}^1(\mathbb{T}^3)$ the set of partially hyperbolic diffeomorphisms isotopic to $A\times id$ where $id$ is the identity map of $\mathbb{S}^1$.

\begin{theorem}
$\mathcal{PH}_A$ is disconnected.
\end{theorem}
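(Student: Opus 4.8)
The strategy is to exhibit two partially hyperbolic diffeomorphisms in $\mathcal{PH}_A$ that cannot be joined by a path inside $\mathcal{PH}_A$, using the invariant center-unstable torus as the obstruction. On one side, take the map $A\times\psi$ (or rather the partially hyperbolic perturbation $f$ constructed in Section \ref{section.example}): it is partially hyperbolic, isotopic to $A\times id$ through the obvious isotopy contracting $\psi$ to the identity on $\mathbb{S}^1$, and by the analysis of Section \ref{sec.concl} it carries a normally hyperbolic $2$-torus $T^{cu}$ tangent to $E^c\oplus E^u$ which is an attractor, hence $\Omega(f)\neq\mathbb{T}^3$. On the other side, take a linear Anosov diffeomorphism of $\mathbb{T}^3$ isotopic to $A\times id$ — for instance a hyperbolic automorphism $L\in SL(3,\mathbb{Z})$ homotopic to $A\times id$, which exists because $A\times id$ acts as $A\oplus 1$ on $H_1(\mathbb{T}^3)$ and one can find a hyperbolic integer matrix with that characteristic behavior on the relevant subspaces, or more simply perturb $A\times id$ in the center to get a genuinely Anosov (and hence transitive, $\Omega=\mathbb{T}^3$) partially hyperbolic map; alternatively just take the Anosov map $\tilde f$ obtained by replacing $\psi$ with a map that expands everywhere. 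Such a map is partially hyperbolic with one-dimensional center, lies in $\mathcal{PH}_A$, and has $\Omega=\mathbb{T}^3$.

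The key point is then an invariance argument along paths. Suppose $f_t$, $t\in[0,1]$, is a continuous path in $\mathcal{PH}_A$ with $f_0=f$ carrying the attracting $cu$-torus. The set of $t$ for which $f_t$ admits a (necessarily unique, since it is a hyperbolic attractor for $f_0$ and these persist) normally hyperbolic attracting $2$-torus tangent to $E^c_{f_t}\oplus E^u_{f_t}$ is open: this is exactly the persistence of normally hyperbolic invariant manifolds (\cite{hps}), together with the fact that the dominated splitting $E^c\oplus E^u$ varies continuously so the tangency is preserved. I would show this set is also closed: if $f_{t_n}$ has such a torus $T_n$ and $t_n\to t_\infty$, the tori $T_n$ are uniformly $C^1$ (normal hyperbolicity gives uniform estimates along a compact path, using a uniform bound on the partially hyperbolic constants and derivatives over $\{f_t\}$), so a subsequence converges to an $f_{t_\infty}$-invariant $C^1$ torus tangent to $E^c\oplus E^u$; normal hyperbolicity of the limit again follows from the uniform domination. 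Hence every $f_t$ on the path has such a torus, so in particular $\Omega(f_t)\neq\mathbb{T}^3$ for all $t$, contradicting $\Omega(f_1)=\mathbb{T}^3$. Therefore $f$ and $\tilde f$ lie in different connected components and $\mathcal{PH}_A$ is disconnected.

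The main obstacle is the closedness step: making precise that a convergent sequence of normally hyperbolic invariant tori, for a convergent sequence of diffeomorphisms, has a normally hyperbolic invariant torus as limit, with the tangency to $E^c\oplus E^u$ preserved. The subtlety is that along a path in $\operatorname{Diff}^1$ the normal hyperbolicity constants could in principle degenerate; this is handled by noting that partial hyperbolicity of $f_t$ gives, on the torus, a splitting $TM|_{T}=E^s\oplus(E^c\oplus E^u)$ in which $E^c\oplus E^u$ dominates $E^s$ with constants that are \emph{locally uniform} in $t$ (the cone-field conditions defining partial hyperbolicity are open), so the relative normal hyperbolicity $r=1$ is maintained; one then invokes the compactness of the graph-transform fixed points (the tori are graphs over $\mathbb{T}^2$ of uniformly Lipschitz, indeed $C^1$-bounded functions) to extract the limiting torus. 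Once this is in place the topological conclusion ($\Omega\neq M$ is inherited along paths) is immediate, and the existence of an Anosov representative in $\mathcal{PH}_A$ — which I would get by directly perturbing $A\times id$ along the center to make $\psi$ a rotation-free expansion, or by citing that $A\times id$ is isotopic to a hyperbolic automorphism of $\mathbb{T}^3$ — closes the argument.
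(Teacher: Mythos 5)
Your core mechanism is the same as the paper's: the set of partially hyperbolic diffeomorphisms in $\mathcal{PH}_A$ possessing an invariant (attracting, normally hyperbolic) torus tangent to $E^c\oplus E^u$ is both open (persistence of normally hyperbolic manifolds) and closed (a limit of such tori is again such a torus, which the paper gets from uniformity of the basins of attraction and you get from uniform graph-transform estimates), hence locally constant on $\mathcal{PH}_A$; the paper phrases this as $\mathcal{PH}_A^T$ being clopen rather than running it along paths, but that is the same argument.

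The genuine problem is your second endpoint. There is \emph{no} Anosov diffeomorphism of $\mathbb{T}^3$ isotopic to $A\times id$, so all three of your proposed constructions fail. A map homotopic to $A\times id$ induces $A\oplus 1$ on $\pi_1(\mathbb{T}^3)=\mathbb{Z}^3$; a linear automorphism in that homotopy class is therefore the matrix $A\oplus 1$ itself, which has eigenvalue $1$ and is not hyperbolic. Moreover, by Franks--Newhouse every codimension-one Anosov diffeomorphism of $\mathbb{T}^3$ is topologically conjugate to its linearization, which must then be hyperbolic, so no Anosov diffeomorphism at all exists in $\mathcal{PH}_A$ --- "perturbing $A\times id$ in the center to make it Anosov" is impossible, and a circle diffeomorphism $\psi$ with $\psi'>1$ everywhere cannot exist since $\int_{S^1}\psi'\,d\theta=1$. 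The fix is immediate and is what the paper does implicitly: take $A\times id$ itself as the second endpoint. It lies in $\mathcal{PH}_A$, its nonwandering set is all of $\mathbb{T}^3$ (so it admits no proper attracting torus), and indeed any surface tangent to its $E^c\oplus E^u=E^u_A\oplus TS^1$ is a union of dense unstable leaves crossed with $S^1$ and hence is never compact. With that substitution your clopen/path argument goes through and coincides with the paper's proof.
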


\begin{proof}
As we have already shown there are diffeomorphisms in $\mathcal{PH}_A$ having an invariant torus tangent to the center-unstable bundle. The set $\mathcal{PH}_A^T$ of such diffeomorphisms is obviously open. Suppose that $f\in \mathcal{PH}_A\cap\clos(\mathcal{PH}_A^T)$ and take $f_n\in \mathcal{PH}_A^T$ converging to $f$. The partial hyperbolicity of $f$ implies that the center unstable $f_n$-invariant tori $T_n$ have basins of attraction of uniform size. Then, the tori $T_n$ converge to an $f$-invariant tori $T$ that is tangent to $E^{cu}(f)$ showing that $f\in \mathcal{PH}_A^T$. This implies that every diffeomorphism in the connected component of one having a center-unstable torus has such a torus too. In particular, the examples of Section \ref{section.example} are not in the component of $A\times id$.
\end{proof}

The same ideas can be used to prove that $\mathcal{PH}_A$ has infinitely many components. This can be achieved  by taking diffeomorphisms with more invariant center-unstable torus. It seems an interesting problem to determine the connected components in function the quantity and  the rotation number of the center unstable (or stable) tori and coherence or incoherence along these tori.


\end{document}